\newtheorem{theorem}{Theorem} 
\newtheorem{corollary}[theorem]{Corollary}
\newtheorem{example}[theorem]{Example}
\newtheorem{lemma}[theorem]{Lemma}
\newtheorem{remark}[theorem]{Remark}
\newtheorem{definition}[theorem]{Definition}
\newenvironment {proof} {{\it Proof.}}{\hspace*{\fill}$\Box$\par\vspace{4mm}}
\newcommand{\sign}{\mbox{\textrm  sign$\,$}}
\newcommand{\mc}{\mathcal}
\newcommand{\mb}{\mathbb}
\newcommand{\ab}{{\boldsymbol{a}}}
\newcommand{\Ab}{{\boldsymbol{A}}}
\newcommand{\Bb}{{\boldsymbol{B}}}
\newcommand{\cb}{{\boldsymbol{c}}}
\newcommand{\db}{{\boldsymbol{d}}}
\newcommand\bdelta{{\boldsymbol \delta}}
\newcommand\bnull{{\boldsymbol 0}}
\newcommand\bone{{\boldsymbol 1}}
\newcommand{\CC}{{\mathbb C}}
 \newcommand{\NN}{{\mathbb N}}
 \newcommand{\RR}{{\mathbb R}}
  \newcommand{\ZZ}{{\mathbb Z}}
\title{Subdivision schemes, network flows and linear optimization}
\author{
 Maria Charina\\
 Institute for Applied Mathematics \\
 Dortmund University of Technology\\
 D-44221 Dortmund, Germany
 \and
 Geir Dahl\\
 Departments of Mathematics, CMA\\
 University of  Oslo\\
 P.O. Box 1053 Blindern, 0316 OSLO, Norway\\
 }
\begin{document}

\maketitle

\begin{abstract}
 We link regularity and smoothness analysis of
 multivariate vector subdivision schemes with network flow
 theory and with special linear optimization problems. This
 connection allows us to prove the existence of
 what we call optimal difference masks that posses crucial properties
 unifying the regularity analysis of univariate and multivariate subdivision schemes.
 We also provide efficient optimization algorithms for
 construction of such optimal masks.
 Integrality of the corresponding optimal values leads to
 purely analytic proofs of $C^k-$regularity of subdivision.
\end{abstract}

\section{Introduction}

 There is a large variety of results in the literature that study
 H\"older and Sobolev regularity and other important properties of scalar
 and vector, univariate and multivariate subdivision schemes, see
 \cite{CDM91, DLsurvey, Reif} and references therein.

 Such schemes are recursive algorithms for mesh refinement and, in the regular case, are
 based on the repeated application of the so-called subdivision operator
$$
 S_\Ab:\ell^{n}(\ZZ^s) \rightarrow \ell^{n}(\ZZ^s),
 \quad S_\Ab \cb(\alpha)=
\sum_{\beta \in \ZZ^s} A(\alpha-M\beta)c(\beta),
 \quad \alpha \in \ZZ^s.
$$

 The efficiency of such algorithms is guaranteed by their
 locality, indeed the so-called subdivision mask $\Ab=\{A(\alpha), \
 \alpha \in \ZZ^s\}$
 is usually finitely supported. The topology of the mesh is
 encoded in the dilation matrix $M \in \ZZ^{s \times s}$. For
 details on various applications of subdivision schemes see e.g.
 \cite{CabrelliHM, Christensen,  ChuiV}.

 The methods for regularity analysis of such schemes are
 either based on the so-called joint spectral radius approach \cite{CJR02, DL1992, J95}
 or on the restricted radius approach \cite{CDM91,CCS04}. The results of \cite{C11}
 unify
these approaches and show that both characterize the
 regularity of subdivision in terms of the same quantity: either
 called the joint spectral radius (JSR) of a certain family of square
 matrices derived from the subdivision mask $\Ab$, or the restricted
 spectral radius (RSR) of an associated linear operator of the
 difference subdivision scheme also derived from $\Ab$. H\"older
 or Sobolev regularity of subdivision is characterized in terms of
 $\infty-$JSR or $2-$JSR, respectively. The question of exact
 computation on $2-$JSR in the subdivision context has been
 extensively studied in \cite{H003, JJ03}. The
 numerical methods for estimation of the $\infty-$JSR differ and its computation,
 in general, is an NP-hard problem \cite{Blondel}. Recent theoretical results and
 numerical tests in
\cite{ProtasovGug}
  lead to exact computation of $\infty-$JSR for a wide class of
 families of matrices and rely on the special choice of the
 so-called extremal matrix norm. There are also various results on
 numerical methods for computation of $\infty-$JSR for particular subdivision
 schemes e.g. \cite{Goodman, Reif_4point}.

 We are interested in pursuing further the idea in
 \cite{C11, CCS04} of using optimization methods for estimation of
 $\infty-$JSR. In sections \ref{sec:dual} and \ref{section:optimal_mask_2_multi},
we show that these optimization problems are of a very
special type, namely, they are network flow problems, or, in
general, equivalent to special linear optimization problems. The
properties of network flow problems allow for exact computation of
what we call the optimal first difference subdivision mask, see
section \ref{sec:optimal_1}. The
spectral properties of the corresponding optimal scheme characterize
the convergence of $S_\Ab$. For
such optimal masks the sufficient condition derived in \cite{D92}
for the convergence of $S_\Ab$ also becomes necessary and, thus,
coincides with the characterization of convergence given in
\cite{C11, CCS04}. The advantage of working with optimal difference
masks is twofold. Firstly, the computation of the norm of the
associated subdivision operator is straightforward, see
\cite{D92}, and exact, if all entries $A(\alpha)$ of the subdivision mask are rational. Thus, it allows for analytic arguments in
the convergence proofs. Secondly, the proof of the existence of
such optimal masks bridges the gap between the convergence
analysis of univariate and multivariate schemes $S_\Ab$, i.e., the
non-restricted and restricted norms of the corresponding difference operator
coincide even in the multivariate case. In section 5, we also
prove the existence of optimal masks for higher order difference schemes
and
provide an algorithm for their construction. In
section \ref{sec:examples}, we illustrate our results with several examples.

\section{Background and notation}

In this paper we make use of the following notation.

\begin{description}
\item[-] By $\epsilon_\ell$, $\ell=1, \dots,s$, we denote the standard unit vectors of $\RR^s$.
\item[-] For $\alpha \in \NN_0^s$ we define $|\alpha|=\alpha_1+\ldots + \alpha_s$.
\item[-]  In the multi-index notation
we have $z^\alpha=z_1^{\alpha_1} \cdots z_s^{\alpha_s}$, $z \in \CC^s$ and $\alpha \in \NN_0^s$.
\item[-] The eigenvalues of the dilation matrix $M \in \ZZ^{s \times s}$ are all greater than $1$ in absolute value.
\item[-] Vector sequences $\cb \in \ell^{n}(\ZZ^s)$ indexed by $\ZZ^s$, i.e. functions from $\ZZ^s$ into $\RR^{n}$,
are denoted by boldface letters. Matrix sequences $\Ab \in \ell^{n \times m}(\ZZ^s)$ indexed by $\ZZ^s$, i.e. functions from $\ZZ^s$ into $\RR^{n \times m}$, are denoted by boldface capital letters.
The space of such sequences with finitely many non-zero elements is denoted by $\ell^{n}_0(\ZZ^s)$ or $\ell^{n \times m}_0(\ZZ^s)$, respectively.
\item[-] The finitely supported matrix sequence $\bdelta_{I_n} \in \ell_0^{n \times n}(\ZZ^s)$  is defined by
$$
 \delta_{ I_n}(\alpha):=\left\{
 \begin{array}{cl} I_n, & \alpha = 0, \\
 0, & \alpha \in \ZZ^s \setminus \{ 0 \}.
 \end{array} \right.
$$
\item[-] The norm on the Banach space $\ell^n_\infty(\ZZ^s)$  is given by
$$
 \|\cb\|_\infty=\sup_{\alpha \in \ZZ^s} \|c(\alpha)\|_\infty.
$$
\item[-] For $C=(C_{i,j}) \in \RR^{n \times n}$  we define  $|C|=(|C_{i,j}|) \in \RR^{n \times n}$.
\item[-] For a real number $a$ we define $a^+=\max\{a,0\}$ and
$a^-=-\min\{a,0\}$.
\end{description}

\subsection{Subdivision schemes}
In this subsection we recall some basic facts about multivariate subdivision schemes.

Let $\Ab \in \ell^{n \times n}_0(\ZZ^s)$, the so-called subdivision mask, be given and be supported on $\{0, \ldots, N\}^s$, $N \in \NN$.
A subdivision scheme
\begin{equation} \label{def:subdivision_recursion}
   \cb^{[k+1]}(\alpha)=S_\Ab \cb^{[k]}(\alpha), \quad k \in \NN_0, \quad \cb^{[0]} \in \ell^n(\ZZ^s), \quad \alpha \in \ZZ^s,
\end{equation}
is a repeated application of the so-called subdivision operator
\begin{equation}\label{def:subdivision_operator}
 S_\Ab: \ell^n(\ZZ^s) \rightarrow \ell^n(\ZZ^s), \quad (S_\Ab \cb)(\alpha)=\sum_{\beta \in \ZZ^s} A(\alpha-M\beta) c(\beta).
\end{equation}
Equivalently, the recursion in \eqref{def:subdivision_recursion} can be written as
$$
  \cb^{[k+1]}=S^{k+1}_\Ab \cb^{[0]}, \quad k \in \NN_0, \quad \cb^{[0]} \in \ell^n(\ZZ^s),
$$
where the iterated operator $S^{k+1}_\Ab$ is defined similarly as in \eqref{def:subdivision_operator} by
replacing $\Ab$ by the so-called iterated mask $\Ab^{[k+1]}$ given by
\begin{equation} \label{def:iterated_mask}
 A^{[k+1]}(\alpha)=\sum_{\beta \in \ZZ^s} A^{[k]}(\beta)A(\alpha-M\beta),
 \quad k \in \NN_0, \quad \alpha \in \ZZ^s, \quad \Ab^{[0]}= \bdelta_{ I_n}.
\end{equation}
The definition of $S_\Ab$ in \eqref{def:subdivision_operator}
implies that we have $|\hbox{det}(M)|$ different subdivision
rules, due to $\alpha=\varepsilon+M\beta$, $\varepsilon \in \Xi
\simeq \left(\ZZ^s / M \ZZ^s \right)$ and $\beta \in \ZZ^s$. The
set $\Xi$ is usually called the set of representatives of the
equivalence classes $\ZZ^s / M \ZZ^s$.  In the simplest case,
$s=1$, $n=1$ and $M=2$, we have $\Xi \simeq \{0,1\}$. Thus, we
have different subdivision rules for odd and even $\alpha \in
\ZZ$. With a slight abuse of notation we denote the subdivision
scheme also by $S_\Ab$.

We say that the subdivision scheme $S_\Ab$ is convergent, if for
any starting sequence $\cb \in \ell_\infty^n(\ZZ^s)$, there exists
a uniformly continuous vector-valued function $f_\cb \in
(C^0(\RR^s))^n$ such that
\begin{equation*}
\label{def:convergenceFunction}
 \lim_{r \rightarrow \infty} \sup_{\alpha \in \ZZ^s} \| f_\cb(M^{-r}\alpha) -
  S_\Ab^r \cb(\alpha)\|_\infty =0.
\end{equation*}

To distinguish between scalar and vector subdivision schemes we denote their masks by $\ab$ and
$\Ab$, respectively.

\subsection{Linear optimization and network flows} \label{subsec:intro_LP}
To familiarize the reader with terminology used in this paper, in this subsection, we introduce some basic notions
from the theory of network flows and linear optimization.

A linear optimization (linear programming, LP) problem  (\cite{Vanderbei08}) is to maximize, or minimize, a linear function of $n$ variables subject to a finite number of linear constraints, each being a linear equation or a linear inequality. Since each equation can be written as two inequalities, every LP problem may be written as
\begin{equation} \label{general_LP}
    \mbox{\textup maximize $d^Tx\;$ subject to $C x \le b,$}
\end{equation}
where $C$ is an $m \times n$ matrix, $b$ and $d$ are (column) vectors  of suitable dimensions, and $x \in \mb{R}^n$ is the vector containing the $n$ optimization variables $x_1 x_2, \ldots, x_n$. A {\em feasible} solution of \eqref{general_LP} is a vector $x \in \mb{R}^n$ satisfying all the constraints. A feasible solution is called {\em optimal}, if no other feasible solution attains a larger value on the objective function $d^Tx$. There are practical, efficient algorithms for solving LP problems, see \cite{Vanderbei08}. For LP problems there exist a powerful duality theory, which associates  to every  LP problem  another LP problem, called the {\em dual} problem, and shows close connections between these two problems. In particular, the two problems have the same optimal value (under a weak assumption). If we take the dual twice, we are back to the original problem. We discuss duality in more detail for the special case of network flow problems.

A special class of LP problems, arising in several applications and useful in the context of subdivision, consists of  {\em network flow} problems. Given a directed graph $G=(V,E)$ with vertex set $V$ and edge set $E$ (where each edge is an ordered pair of vertices), real numbers $b_v$ for each $v \in V$ and real numbers (costs) $c_{uv}$ for each $(u,v) \in E$; we write $uv$ sometimes instead of $(u,v)$. The {\em minimum cost network flow} problem is the following LP problem
\begin{equation}
 \label{eq:MCNF}
 \begin{array}{lcl}
   \mbox{\rm minimize} &     \displaystyle \sum_{(u,v) \in E} c_{uv}f_{uv} \\
   \mbox{\rm subject to} \\
        &   \displaystyle \sum_{u: (v,u) \in E} f_{vu}  -
            \sum_{u: (u,v) \in E} f_{uv} = b_v & (v \in V)
            \\ \\*[\smallskipamount]
        & f_{uv} \ge 0  \quad &((u,v) \in E).
 \end{array}
\end{equation}
The interpretation of \eqref{eq:MCNF} is as follows: The variable $f_{uv}$  represents the flow from $u$ to $v$ along
the edge $(u,v)$, and $c_{uv}$ is the unit cost of sending this  flow, so the objective function represents the total cost.
The constraints represent flow balance at every vertex: the total flow leaving a vertex $v$ minus the total flow into the same vertex equals the given number $b_v$, which is called the supply at $v$. Finally, flows $f_{uv}$ are required to be nonnegative at each edge. Often one also has upper bounds (capacities) on flows, but we do not need this here. We assume that problem \eqref{eq:MCNF} has a feasible solution (conditions that guarantee this are known, and require that $\displaystyle \sum_{v \in V} b_v=0$,  see \cite{AMO93}). This condition on $b_v$ has a natural interpretation in the subdivision context,
see Remark \ref{rem:2}. The {\em dual} of the minimum cost network flow problem (\ref{eq:MCNF}) is
\begin{equation}
 \label{eq:dualMCNF}
 \begin{array}{lcl}
   \mbox{\rm maximize} &    \displaystyle \sum_{v \in V} b_v \,x_v \\
   \mbox{\rm subject to} \\
        & x_u-x_v \le c_{uv}&((u,v) \in E).
 \end{array}
\end{equation}
In this problem the variables $x_v$ are associated with vertices and may be interpreted as a kind of potential. Now, assume $f_{uv}$ ($(u,v) \in E$) and $x_v$ ($v \in V$) are feasible solutions of (\ref{eq:MCNF}) and (\ref{eq:dualMCNF}), respectively. Then
\[
    \sum_{(u,v)} c_{uv} f_{uv} \ge \sum_{(u,v)} (x_u-x_v)f_{uv} = \sum_v (\sum_u f_{vu}-\sum_u f_{uv})x_u = \sum_v b_vx_v.
\]
This shows {\em weak duality} which says that the optimal value of (\ref{eq:MCNF}) is not smaller than the optimal value of (\ref{eq:dualMCNF}). Actually, a stronger result, the {\em  duality theorem},  says that these two optimal values are equal. This fact is exploited in very efficient algorithms for solving the minimum cost network flow problem, see \cite{AMO93}. An important result is the {\em integrality theorem} which says that, if each $b_v$ is an integer,  then problem (\ref{eq:MCNF}) has an optimal solution where each $f_{uv}$ is an integer. We then say that the optimal solution is {\em integral}. This integrality property is due to special properties of the coefficient matrix of the flow balance equations, and for other classes of LP problems it may happen that no optimal solution is integral (i.e., in every optimal solution,  at least one variable is not an integer).

\section{First difference subdivision schemes and network flow problems}

In this section we show that optimization problems considered in
\cite{CCS04} for convergence analysis of subdivision are of a very
special type, namely, they are network flow problems.

\subsection{Scalar case} \label{sec:1_diff_scalar}
For simplicity of presentation we start with the scalar
multivariate case, i.e. $n=1$. The subdivision mask $\ab \in
\ell_0(\ZZ^s)$ is a finitely supported sequence of real numbers
$a(\alpha)$, $\alpha \in \ZZ^s$. The convergence analysis of such
subdivision schemes relies on what we call the backward difference
operator $
 \nabla: \ell(\ZZ^s) \rightarrow \ell^s(\ZZ^s)$ given by
\begin{equation}\label{def:nabla}
  \nabla=\left( \begin{array}{c} \nabla_1\\ \vdots \\ \nabla_s \end{array} \right), \quad
 \left(\nabla_\ell \cb \right) (\alpha)=c(\alpha)-c(\alpha-e_\ell), \quad \alpha \in \ZZ^s, \quad \cb \in \ell(\ZZ^s).
\end{equation}
A matrix sequence $\Bb \in \ell^{s \times s}(\ZZ^s)$ that satisfies
\begin{equation} \label{def:S_AvsS_B}
 \nabla S^r_\ab= S^r_\Bb \nabla, \quad r \in \NN,
\end{equation}
defines the so-called difference subdivision operator $S_\Bb : \ell^{s \times s}(\ZZ^s) \rightarrow \ell^{s \times s}(\ZZ^s)$ by
\begin{equation}\label{def:SB}
 S_\Bb \db (\alpha)=\sum_{\beta \in \ZZ^s} B(\alpha-M\beta) d(\beta), \quad \db \in \ell^s(\ZZ^s).
\end{equation}
The existence of $\Bb$ is equivalent to the fact that $\ab$ satisfies sum rules of order $1$, see \cite{S02} for
details. The assumption that $\ab$ satisfies sum rules of order $1$ is by no means restrictive, as it is
also a necessary condition for convergence of $S_\ab$, see e.g. \cite{CDM91, J95}.

We index the entries $B_{j,\ell}(\alpha)$ of the matrices
$B(\alpha)$ by $j=1, \dots, s$ and by $\ell=1, \ldots, s$ to match
the indexing of the entries $\nabla_\ell$ of the difference
operator $\nabla$. One of the approaches for characterizing
convergence of subdivision schemes studies the spectral properties
of the operator $S_\Bb$. Let $r \in \NN$. The results of
\cite{D92} use the non-restricted norm
$$
 \|S^r_\Bb \|_\infty=\max_{\varepsilon \in \Xi_r} \left\|\sum_{\beta \in \ZZ^s} \left|B^{[r]}(\varepsilon-M^r\beta) \right| \right\|_\infty, \quad
 \Xi_r \simeq \left(\ZZ^s / M^r \ZZ^s \right),
$$
to derive sufficient conditions for convergence of the subdivision
scheme $S_\ab$.  In \cite{CCS04} the authors use the restricted
norm
\begin{equation}\label{def:nonresticted_norm}
 \|S^r_\Bb|_\nabla\|_\infty=\max_{\|\nabla \cb\|_\infty=1} \|S^r_\Bb \nabla \cb\|_\infty
\end{equation}
to characterize the convergence of $S_\ab$. Due to \eqref{def:S_AvsS_B}, the operator $S_\Bb$ maps the difference subspace
$\nabla \ell(\ZZ^s)$ into itself and, thus, its restriction $S^r_\Bb|_\nabla$ to  $\nabla \ell(\ZZ^s)$ is well-defined.

Define $K=\{-N-1,\ldots,0\}^s$. Due to $\Bb \in \ell^{s \times s}_0(\ZZ^s)$ and by the periodicity of the operator $S_\Bb$, we have
\begin{eqnarray} \label{def:lin_prog_1}
   \|S^r_\Bb|_\nabla\|_\infty =\max_{\|\nabla \cb|_{K}\|_\infty=1}\max_{\varepsilon \in \Xi_r}  \left\| \sum_{\beta \in K}
   B^{[r]}(\varepsilon-M^r\beta) (\nabla \cb)(\beta) \right\|_\infty.
\end{eqnarray}
The problem of computing of $\|S^r_\Bb|_\nabla\|_\infty$ in
\eqref{def:lin_prog_1} consists of several linear optimization problems for the finitely many unknowns
$c(\beta)$, $\beta \in \{-N-2, \ldots, 0\}^s$. Indeed, to compute
the maximum in \eqref{def:lin_prog_1}, it suffices,  for each pair
$(\varepsilon,j) \in \Xi_r \times\{1,\ldots,s\}$, to solve the
linear optimization problem
\begin{equation} \label{def:lin_prog_2}
 \begin{array}{lcl}
   \max &  \displaystyle \sum_{\beta \in K} \sum_{\ell=1}^s B_{j,\ell}^{[r]}
   (\varepsilon-M^r\beta) (\nabla_\ell \cb)(\beta) \\
   \mbox{\rm subject to} \\
        & -1 \le c(\beta)-c(\beta-\epsilon_\ell) \le 1, \quad \beta \in K, \quad \ell=1, \ldots,s.
 \end{array}
\end{equation}
and, then, determine, over all $(\varepsilon,j) \in \Xi_r
\times\{1,\ldots,s\}$, the maximum of the corresponding optimal
values in \eqref{def:lin_prog_2}. See \cite{CCS04} for details. In
the following two subsections, we show that the problem in
\eqref{def:lin_prog_2} can be interpreted as a network flow
problem.

\subsubsection{Dual of a minimum cost problem and its properties}
\label{sec:dual}

In this subsection we show that the problem in
\eqref{def:lin_prog_2} is the dual of a minimum cost network flow
problem introduced in subsection \ref{subsec:intro_LP}. To arrive at this conclusion we need to introduce some
additional notation.

We define a directed graph $G=(V,E)$ with the vertex set
$V=\{-N-2,\ldots, 0\}^s$ and the edge set
$$
 E=\{ (u,v)=(\beta, \beta-\epsilon_\ell) \ : \ \beta \in \{-N-1,\ldots, 0\}^s, \ \ell=1,\ldots,s\}.
$$
Note that the undirected graph corresponding to $G$ is connected.
Moreover, $G$ is acyclic, i.e. $G$ does not contain a directed
cycle. For a fixed $r \in \NN$ and for each pair
$(\varepsilon,j) \in \Xi_r \times\{1,\ldots,s\}$, we also define a function $d: E
\rightarrow \RR$ by
$$
 d(e)=d_{uv}= B^{[r]}_{j,\ell}(\varepsilon-M^r\beta), \quad e=(u,v)=(\beta, \beta-\epsilon_\ell) \in E.
$$

\begin{definition}
A function $x: V \rightarrow \mb{R}$, where $x(v)=x_v$ denotes the function value
at a vertex  $v \in V$,   is
called $C-$smooth if
\begin{equation}
 \label{eq:feasible}
   -1 \le x_u - x_v \le 1 \;\;\;\;\mbox{\rm for all $(u,v) \in E$.}
\end{equation}
\end{definition}
We call such a function $C-$smooth to emphasize that the constraints in \eqref{eq:feasible} appear in
convergence analysis of $S_\ab$.

Note that solving \eqref{def:lin_prog_2} for the unknown sequence $\cb$ amounts to solving for $x$ the problems
\begin{equation}
 \label{eq:LPd}
  z^*_C=\max \,\{\sum_{(u,v) \in E} d_{uv} (x_u-x_v):
   \mbox{\rm $x$ is $C$-smooth} \}
\end{equation}
and  then finding the maximum of these values over $\varepsilon
\in \Xi_r$ and  $j=1, \ldots, s$.

We compare next the properties of the optimization problem
\eqref{eq:LPd}  with the properties of the difference subdivision
operator $S_\Bb$.

\begin{remark} \label{rem:2} Define the weights
$$
 b_v^d=\sum_{u:(v,u) \in E} d_{vu}-\sum_{u:(u,v) \in E} d_{uv} \quad \hbox{for} \ v \in V.
$$
The identity
\begin{equation}
 \label{eq:sum0}
  \sum_{v \in V} b^d_v = \sum_{v \in V} \big(\sum_{u:(v,u) \in E}
  d_{vu} -\sum_{u:(u,v) \in E} d_{uv}\big)=0
\end{equation}
is due to the simple fact that each of the terms $d_{uv}$ appears in the above identity twice with the opposite signs.
Note that the identity \eqref{eq:sum0} is equivalent to
$$
 \|S_\Bb \nabla \cb\|_\infty=0 \quad \hbox{for a constant sequence} \ c(\alpha)=c(\beta), \quad \alpha,\beta \in \ZZ^s.
$$
\end{remark}

The property $\displaystyle z^*_C=\|S^r_\Bb|_\nabla\|_\infty \le \|S^r_\Bb\|_\infty=\|d\|_1 = \sum_{(u,v) \in E} |d_{uv}|$ is reflected in the following lemma.

\begin{lemma}
 \label{lem:simple}
  The problem $(\ref{eq:LPd})$ has an optimal solution, so it is
  feasible and not unbounded. Moreover, its optimal value $z_C^*$ satisfies
  \[
       0 \le z_C^* \le \|d\|_1 = \sum_{(u,v) \in E} |d_{uv}|.
  \]
\end{lemma}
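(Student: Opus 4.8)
The plan is to establish the three claims in the stated order: feasibility, boundedness above (hence existence of an optimum via compactness or LP theory), and the two numerical bounds on $z_C^*$.

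First I would verify feasibility. The constraints in \eqref{eq:feasible} require $-1 \le x_u - x_v \le 1$ for each edge. The zero function $x \equiv 0$ trivially satisfies these, so the feasible set is nonempty. This immediately gives $z_C^* \ge 0$, since evaluating the objective at $x \equiv 0$ yields $\sum_{(u,v)} d_{uv}(0-0) = 0$, and the optimum is a maximum over a set containing this point. That disposes of the lower bound.

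Next I would address boundedness from above. Here the key observation is that the objective $\sum_{(u,v)\in E} d_{uv}(x_u - x_v)$ depends on $x$ only through the edge differences $x_u - x_v$, and each such difference is constrained to lie in $[-1,1]$. Therefore, termwise, each summand $d_{uv}(x_u - x_v)$ is at most $|d_{uv}| \cdot 1 = |d_{uv}|$. Summing over all edges gives
\[
 \sum_{(u,v)\in E} d_{uv}(x_u - x_v) \le \sum_{(u,v)\in E} |d_{uv}| = \|d\|_1
\]
for every $C$-smooth $x$. This shows the objective is bounded above by $\|d\|_1$ on the feasible set, so $z_C^* \le \|d\|_1$, and in particular the problem is not unbounded.

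Finally, existence of an optimal solution follows from standard linear programming theory: a feasible LP whose objective is bounded above on the feasible region attains its maximum. Alternatively, one can argue directly: although the full feasible set is unbounded (adding a constant to every $x_v$ changes nothing), the objective and constraints factor through the edge-difference vector $(x_u - x_v)_{(u,v)\in E}$, which ranges over a compact polytope contained in $[-1,1]^{|E|}$; a continuous linear function on a compact set attains its maximum. Either route closes the argument. The only step requiring a moment's care is confirming that the objective genuinely depends only on the edge differences and that the termwise bound $d_{uv}(x_u-x_v) \le |d_{uv}|$ is tight enough to give $\|d\|_1$ — but since we are only asserting $z_C^* \le \|d\|_1$ (an inequality, not equality), the crude termwise estimate suffices and no delicate optimization over the sign pattern of $d$ is needed. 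I expect no real obstacle here; the lemma is essentially a bookkeeping statement reflecting the operator-norm inequality $\|S^r_\Bb|_\nabla\|_\infty \le \|S^r_\Bb\|_\infty$ noted just before its statement.
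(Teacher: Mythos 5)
Your proof is correct and follows essentially the same route as the paper: feasibility via the zero function (giving $z_C^* \ge 0$) and the termwise bound $|d_{uv}||x_u-x_v| \le |d_{uv}|$ summed over edges (giving $z_C^* \le \|d\|_1$). Your explicit justification of attainment --- either by the standard fact that a feasible LP bounded above attains its optimum, or by noting the objective factors through the edge-difference vector ranging over a compact polytope in $[-1,1]^{|E|}$ --- fills in a step the paper leaves implicit, and both variants are sound.
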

\begin{proof}
 The constant function $x=0$ is $C$-smooth, so problem \eqref{eq:LPd}
 is feasible, i.e., has feasible solutions, and  $z^*_C
 \ge 0$. If $x$ is $C$-smooth, then the objective function
 $$
  f^d(x)=\sum_{(u,v) \in E} d_{uv}(x_u-x_v)
 $$
 satisfies
 \begin{eqnarray*}
  f^d(x) &\le& |f^d(x)| \le \sum_{(u,v) \in E} |d_{uv}(x_u-x_v)|\\  &=& \sum_{(u,v) \in E}
 |d_{uv}||x_u-x_v| \le \sum_{(u,v) \in E} |d_{uv}| = \|d\|_1.
 \end{eqnarray*}
\end{proof}

Moreover, we associate to $G$  symmetric directed graph
$\bar{G}=(V,\bar{E})$ with the edge set
$$
  \bar{E}=E \cup \{ (v,u)\ :\ (u,v) \in E\}.
$$
It is easy to see that the linear optimization problem in
\eqref{eq:LPd} is equivalent to
\begin{equation}
 \label{eq:LPd2}
 \hspace{-2cm} \hbox{(DualFlow($d$))} \hspace{2cm}
 \begin{array}{lcl}
   \max &    \displaystyle \sum_{v \in V} b^d_v \,x_v \\
   \mbox{\rm subject to} \\
        & x_v-x_u \le 1 &((u,v) \in \bar{E}).
 \end{array} \notag
\end{equation}

\subsubsection{Minimum cost network flow problem}

In this subsection we cast a more detailed look at network flow problems introduced in subsection 2.2.
The problem in DualFlow($d$) is the standard form of a dual of the
following minimum cost network flow problem
\begin{equation}
 \label{eq:LPp}
 \hspace{-1cm} \hbox{(Flow($d$))} \hspace{2cm}
 \begin{array}{lcl}
   \min &     \displaystyle \sum_{(u,v) \in \bar{E}} f_{uv} \\
   \mbox{\rm subject to} \\
        &   \displaystyle \sum_{u: (v,u) \in \bar{E}} f_{vu}  -
            \sum_{u: (u,v) \in \bar{E}} f_{uv} = b_v^d & (v \in V)
            \\ \\*[\smallskipamount]
        & f_{uv} \ge 0  \quad &((u,v) \in \bar{E}).
 \end{array} \notag
\end{equation}
The flow variable $f_{uv}$ in Flow($d$) represents the flow from $u$ to $v$ along
the edge $(u,v)$. The linear constraints (equations) are flow balance constraints, see (\ref{eq:MCNF}).
For each vertex $v$, these constraints imply that the difference
between total flow out of vertex $v$ and the total flow into the
same vertex is equal to $b^d_v$, which may be considered as the
divergence (net supply) at $v$. A {\em feasible flow} $f$ is a
function $f: \bar{E} \rightarrow \mb{R}$ whose function values
$f((u,v))=f_{uv}$ satisfy the constraints of Flow($d$). An {\em optimal flow} $f^*$ is
a feasible flow that minimizes the objective function $\displaystyle \sum_{(u,v) \in \bar{E}} f_{uv}$ in
Flow($d$).

\begin{remark} \label{optimal_flow_nec_cond}
 If $f^*$ is an optimal flow, then, for each edge $(u,v) \in E$,
 either $f^*_{uv}$ or $f^*_{vu}$ is zero. Otherwise, one could
 reduce both $f^*_{uv}$ and $f^*_{vu}$ by the same small positive
 quantity, which would contradict the optimality of $f^*$.
\end{remark}

The objective function $ \displaystyle \sum_{(u,v) \in \bar{E}}
f_{uv}$ represents the total flow cost. The problem Flow($d$) is a
quite special network flow problem: the costs, i.e. the
coefficients of $f_{uv}$ in the objective function, on the edges
are all 1; and there is no upper bound on the flow in each edge.
These properties together with $\displaystyle{\sum_{v \in V}
b^d_v=0}$ and the fact that the undirected graph associated with
$G$ is connected, imply that the problem Flow($d$) is feasible and
has an optimal solution, see \cite{Vanderbei08}. The existence of
an optimal solution of Flow($d$) also follows from Lemma
\ref{lem:simple} as the dual problem DualFlow($d$) has an optimal
solution. The following result is one of the consequences of this
duality relationship.

\begin{theorem}
 \label{thm:LPduality}
  The optimal value $z^*_C$ in \eqref{eq:LPd} equals the
  optimal value in the network flow problem Flow($d$).
\end{theorem}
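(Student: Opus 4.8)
The plan is to recognize this theorem as a direct instance of linear programming strong duality, applied to the pair Flow($d$) and DualFlow($d$) that has already been assembled. Concretely, the argument has two parts: first, confirm that $z^*_C$ in \eqref{eq:LPd} is genuinely the optimal value of DualFlow($d$); and second, verify that the hypotheses of the duality theorem quoted in Section \ref{subsec:intro_LP} hold for the pair Flow($d$)/DualFlow($d$), so that their optimal values coincide. Chaining these two equalities then identifies $z^*_C$ with the optimal value of Flow($d$).

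For the first part I would check directly that the maximization in \eqref{eq:LPd} and the one in DualFlow($d$) are the same optimization problem. The objective of DualFlow($d$) rewrites edge-by-edge: by the definition of $b^d_v$ in Remark \ref{rem:2}, each edge $(u,v) \in E$ with weight $d_{uv}$ contributes $+d_{uv}$ to $b^d_u$ (as an outgoing term) and $-d_{uv}$ to $b^d_v$ (as an incoming term), so that $\sum_{v \in V} b^d_v x_v = \sum_{(u,v) \in E} d_{uv}(x_u - x_v) = f^d(x)$. For the feasible region, the two-sided bound $-1 \le x_u - x_v \le 1$ ranging over $E$ is precisely the family $x_v - x_u \le 1$ ranging over the symmetric set $\bar{E}$, since $\bar{E}$ contains each edge together with its reverse. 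This establishes the equivalence asserted just before the theorem.

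For the second part, strong duality for this network flow pair requires that both problems be feasible with finite optimal value. On the dual side I would invoke Lemma \ref{lem:simple}, which already gives that \eqref{eq:LPd}, hence DualFlow($d$), is feasible and attains a finite optimum $z^*_C$ with $0 \le z^*_C \le \|d\|_1$. On the primal side, feasibility of Flow($d$) follows from the balance identity $\sum_{v \in V} b^d_v = 0$ proved in \eqref{eq:sum0} together with connectivity of the undirected graph underlying $G$, which together guarantee the existence of a feasible flow; moreover, since all costs equal $1$ and the flows are nonnegative, the objective is bounded below by $0$. With both problems feasible and finitely bounded on a finite graph, the duality theorem yields equality of their optimal values, which is exactly the claim.

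I do not expect a real obstacle, since the substantive work already resides in Lemma \ref{lem:simple} and Remark \ref{rem:2}; the only point that demands care is the sign bookkeeping in the edge-wise rewriting of the objective, making sure that the outgoing contributions to $b^d_u$ and the incoming contributions to $b^d_v$ pair up correctly into the $d_{uv}(x_u - x_v)$ terms. Once that is verified, the theorem reduces cleanly to the strong duality statement already recorded in Section \ref{subsec:intro_LP}.
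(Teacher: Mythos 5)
Your proposal is correct and follows essentially the same route as the paper: the paper's proof is a one-line appeal to network flow duality (citing the standard theory), and your argument simply makes explicit the hypotheses of that theorem---the identification of \eqref{eq:LPd} with DualFlow($d$) and the feasibility of both problems---which the paper itself verifies in the surrounding text (Lemma \ref{lem:simple}, Remark \ref{rem:2}, and the discussion preceding the theorem). Your sign bookkeeping in the rewriting $\sum_{v \in V} b^d_v x_v = \sum_{(u,v) \in E} d_{uv}(x_u - x_v)$ is accurate, so no gap remains.
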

\begin{proof}
 This follows from the network flow duality theory, see e.g. \cite{AMO93}.
\end{proof}

\subsection{Vector case}

In the vector case, i.e. $n > 1$, the mask $\Ab \in \ell^{n \times n}(\ZZ^s)$ has matrix
entries $A(\alpha)$, $\alpha \in \ZZ^s$.
The associated first difference scheme is given by repeated applications of the operator $S_\Bb: \ell^{ns}(\ZZ^s) \rightarrow \ell^{ns}(\ZZ^s)$.
There is no conceptual change in the structure of the linear optimization problems in \eqref{def:lin_prog_2}, see \cite{CCS04} for details.
Therefore, even in the vector case, the convergence analysis of subdivision schemes profits from the theory of
network flows. We omit the formulations of the corresponding results to avoid repetitions.


\section{Optimal first difference masks}
\label{sec:optimal_1}


In this section we show that there exists an optimal difference
mask $\Bb^* \in \ell_0^{s \times s}(\ZZ^s)$, possibly different
for each $r \in \NN$, such that the corresponding operator
$S_{\Bb^*}$ in \eqref{def:SB} satisfies $\nabla S^r_\ab=S_{\Bb^*}
\nabla$ and
\begin{equation} \label{prop:optimal_1_mask}
 \|S_{\Bb^*}\|_\infty=\|S_{\Bb^*}|_\nabla\|_\infty=\|S^r_{\Bb}|_\nabla\|_\infty
\end{equation}
for any other $S_{\Bb}$ satisfying  $\nabla S^r_\ab=S^r_{\Bb}
\nabla$. The algorithm for construction of $\Bb^*$ in section
\ref{section:algorithms_and_examples} is such that for a given
difference mask $\Bb$ with rational entries the optimal mask is also rational.
Thus, the norm $\|S_{\Bb^*}\|_\infty$ is rational, which allows
for analytic arguments in convergence proofs for $S_\ab$. In the
multivariate case, such masks $\Bb^*$ possibly differ for each $r
\in \NN$, see Example \ref{ex:iterated_mask}.

 \subsection{Univariate case} \label{subsec:univariate_case}

  In the univariate case, it is well known that the operator $S_\Bb$ is unique and the maximizing sequence in \eqref{def:lin_prog_2} is
  determined uniquely, up to a constant sequence, by
  $$
    c(\beta)-c(\beta-1) =\hbox{sgn} B^{[r]}(\varepsilon-M^r\beta), \quad \beta \in \{-N-1,\ldots,0\},
  $$
  i.e. $z_C^*=\|S^r_\Bb|_\nabla\|_\infty=\|S^r_\Bb\|_\infty=\|d\|_1$ for any $r \in \NN$. The same holds for higher order difference
  schemes. This property of $z_C^*$  also follows directly from Theorem \ref{thm:theright-d}
  in section \ref{subsection:optimal_mask_1_multi}
  and  Theorem \ref{th:existence_optimal_mask_general_case} in section
  \ref{section:optimal_mask_2_multi}.

\subsection{Multivariate case} \label{subsection:optimal_mask_1_multi}

In the multivariate case, the difference subdivision operator $S_\Bb$ in
\eqref{def:S_AvsS_B} is not unique, see \cite{CCS04}.

Fix $r \in \NN$, $\varepsilon \in \Xi_r$ and $j=1, \ldots, s$. The
next result shows that there exists $\Bb^* \in \ell_0^{s \times
s}(\ZZ^s)$ such that
$$
  z^*_{C}=\sum_{\beta \in K} \sum_{\ell=1}^s |B^*_{j,\ell}(\varepsilon-M^r\beta)|=\sum_{(u,v) \in \bar{E}} f^*_{uv},
$$
i.e., the restricted and non-restricted norms of $S_{\Bb^*}$ coincide.

 \begin{theorem}
 \label{thm:theright-d}
  Let $d: E \rightarrow \RR$ be given. Let $f^*$ be an optimal flow in \textup{Flow($d$)} and let $x^*$ be an
  optimal solution of \textup{DualFlow($d$)}. Define the function $d^*: E \rightarrow \RR$ by
  $d^*_{uv}=f^*_{uv}-f^*_{vu}$ for each $(u,v) \in E$.

  Then  $b^d_v=b^{d^*}_v$ for all $v \in V$, i.e. \textup{Flow($d$)} and \textup{Flow($d^*$)} coincide, and
  so do \textup{DualFlow($d$)} and \textup{DualFlow($d^*$)}. Moreover, the common optimal value of these problems is
  equal to $\|d^*\|_1$, i.e.
  \[
      \sum_{(u,v) \in \bar{E}} f^*_{uv}=\sum_{v \in V} b^{d^*}_v x^*_v = \|d^*\|_1.
  \]
 \end{theorem}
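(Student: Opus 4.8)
The plan is to establish the theorem in three stages, each exploiting the structure of the flow $d^*$ built from the optimal primal and dual solutions. First I would verify the claim $b^d_v = b^{d^*}_v$ for all $v \in V$. By definition $d^*_{uv} = f^*_{uv} - f^*_{vu}$ on each $(u,v) \in E$, and the weight $b^{d^*}_v$ is the net outflow of $d^*$ at $v$ over the edges of the original graph $G$. I would rewrite $b^{d^*}_v$ in terms of the symmetric graph $\bar G$ by noting that summing $d^*_{uv} = f^*_{uv} - f^*_{vu}$ over outgoing edges of $G$ and $d^*_{vu} = f^*_{vu} - f^*_{uv}$ over the reversed edges reproduces exactly the net flow balance of $f^*$ on $\bar G$. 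Since $f^*$ is feasible in \textup{Flow($d$)}, that net balance equals $b^d_v$, giving $b^{d^*}_v = b^d_v$. This identity immediately implies that \textup{Flow($d$)} and \textup{Flow($d^*$)} have identical constraints and objective (the objective only depends on the graph, not on $d$), hence coincide, and likewise for the duals, which depend on $d$ only through the weights $b^d_v$.

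Second I would show that $f^*$ remains optimal for the identical problem and that the common optimal value is $\sum_{(u,v) \in \bar E} f^*_{uv}$, which is automatic since the two flow problems are literally the same LP. The content lies in the third stage: proving this optimal value equals $\|d^*\|_1 = \sum_{(u,v) \in E} |d^*_{uv}|$. Here I would use Remark \ref{optimal_flow_nec_cond}, which says that for an optimal flow $f^*$, on each edge $(u,v) \in E$ at least one of $f^*_{uv}$, $f^*_{vu}$ vanishes. Consequently $|d^*_{uv}| = |f^*_{uv} - f^*_{vu}| = f^*_{uv} + f^*_{vu}$ for every $(u,v) \in E$, because when one term is zero the absolute value of the difference equals the sum of the two nonnegative flows. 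Summing over $E$ then yields $\|d^*\|_1 = \sum_{(u,v) \in E}(f^*_{uv} + f^*_{vu}) = \sum_{(u,v) \in \bar E} f^*_{uv}$, the last equality holding because $\bar E$ consists of each edge of $E$ together with its reverse. This identifies the objective value of $f^*$ with $\|d^*\|_1$.

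Finally I would close the loop using LP duality. By Theorem \ref{thm:LPduality} (the network flow duality theorem), the optimal value of \textup{Flow($d$)} equals the optimal value $z^*_C$ of \textup{DualFlow($d$)}, attained at $x^*$, so $\sum_{v \in V} b^{d^*}_v x^*_v = \sum_{v \in V} b^d_v x^*_v = z^*_C = \sum_{(u,v) \in \bar E} f^*_{uv}$. Chaining this with the third-stage identity gives the displayed three-way equality $\sum_{(u,v) \in \bar E} f^*_{uv} = \sum_{v \in V} b^{d^*}_v x^*_v = \|d^*\|_1$, completing the proof.

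I expect the main obstacle to be the bookkeeping in the first stage, namely carefully tracking the sign conventions when passing between the net-outflow definition of $b^{d^*}_v$ over $G$ and the flow-balance equation of $f^*$ over the symmetric graph $\bar G$; one must ensure the reversed edges contribute with the correct orientation so that the telescoping into the feasibility constraint of \textup{Flow($d$)} is exact. The third stage is conceptually the crux but is short once Remark \ref{optimal_flow_nec_cond} is invoked; the real risk is an orientation or double-counting error in relating sums over $E$ to sums over $\bar E$.
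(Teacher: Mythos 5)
Your proposal is correct and follows essentially the same route as the paper's own proof: you establish $b^d_v=b^{d^*}_v$ by rewriting the flow-balance feasibility of $f^*$ on $\bar{G}$ in terms of $d^*$, use Remark \ref{optimal_flow_nec_cond} to get $|d^*_{uv}|=f^*_{uv}+f^*_{vu}$ on each $(u,v)\in E$, and invoke the duality of Theorem \ref{thm:LPduality} to chain $\sum_{v}b^{d^*}_v x^*_v=\sum_{(u,v)\in\bar{E}}f^*_{uv}=\|d^*\|_1$. The bookkeeping you flag as the main risk is handled correctly, so there is nothing to add.
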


\begin{proof}
 Let $x^*$ be an optimal solution of \textup{Dualflow($d$)}.  Define $d^*$ as stated in
 the theorem. Then, as $f^*$ is a feasible solution of  \textup{Flow($d$)},
 for each $v \in V$, we have
\begin{eqnarray*}
     b^d_v &=& 
          \sum_{u:(v,u) \in \bar{E}} f^*_{vu}-\sum_{u:(u,v) \in \bar{E}} f^*_{uv}
                \\*[\smallskipamount]
           &=& \sum_{u:(v,u) \in E} (f^*_{vu}-f^*_{uv}) -\sum_{u:(u,v) \in E} (f^*_{uv}-f^*_{vu})
                          \\*[\smallskipamount]
            &=& \sum_{u:(v,u) \in E} d^*_{vu}-\sum_{u:(u,v) \in E} d^*_{uv}= b^{d^*}_v
  \end{eqnarray*}
 which proves the first statement.

 Next, as $f^*$ is optimal, by Remark \ref{optimal_flow_nec_cond}, for each $(u,v) \in E$
 at most one of the two variables $f^*_{uv}$ and $f^*_{vu}$ can be positive.
 So for each $(u,v) \in E$
 \[
    f^*_{uv}+f^*_{vu}=|f^*_{uv}-f^*_{vu}|=|d^*_{uv}|
 \]
 and therefore, since \textup{Flow($d$)} and  \textup{Dualflow($d$)} have the same optimal value
 \[
   \sum_{v \in V} b^{d^*}_v x^*_v = \sum_{(u,v) \in \bar{E}} f^*_{uv}=\sum_{(u,v) \in E} (f^*_{uv}+f^*_{vu})=\sum_{(u,v) \in E} |d^*_{uv}|=\|d^*\|_1
 \]
 as desired.
\end{proof}

We are guaranteed to have an integral optimal $d^*$, i.e. a rational optimal $\Bb^*$, under the additional assumption that $d$ is integral.

\begin{corollary}
  Let $d: E \rightarrow \ZZ$ and $f^*$ be an integral optimal flow in \textup{Flow($d$)}. Then  $d^*: E \rightarrow \ZZ$
  defined by $d^*_{uv}=f^*_{uv}-f^*_{vu}$, $(u,v) \in E$, satisfies
  \[
      \sum_{(u,v) \in \bar{E}} f^*_{uv} = \|d^*\|_1.
  \]
\end{corollary}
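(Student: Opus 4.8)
The plan is to observe that this corollary is essentially the integral specialization of Theorem \ref{thm:theright-d}, so the main task is to track where integrality enters and to confirm that the integrality theorem for network flows supplies the one nonobvious ingredient. First I would invoke the integrality theorem stated in subsection \ref{subsec:intro_LP}: since $d: E \rightarrow \ZZ$ is integral, each supply $b^d_v = \sum_{u:(v,u)\in E} d_{vu} - \sum_{u:(u,v)\in E} d_{uv}$ is a difference of integers, hence an integer, so the network flow problem \textup{Flow($d$)} has an integral optimal flow $f^*$. This justifies the hypothesis that such an $f^*$ exists. Given integrality of $f^*$, the function $d^*_{uv} = f^*_{uv} - f^*_{vu}$ is integer-valued on $E$, which is exactly the claim $d^*: E \rightarrow \ZZ$.

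Next I would reuse Theorem \ref{thm:theright-d} verbatim for the quantitative identity. The key structural fact is Remark \ref{optimal_flow_nec_cond}: because $f^*$ is optimal, for each edge $(u,v) \in E$ at most one of $f^*_{uv}$, $f^*_{vu}$ is positive, so $f^*_{uv} + f^*_{vu} = |f^*_{uv} - f^*_{vu}| = |d^*_{uv}|$. Summing over $E$, splitting the objective $\sum_{(u,v)\in\bar E} f^*_{uv}$ according to the two orientations of each undirected edge, gives
\[
  \sum_{(u,v)\in\bar E} f^*_{uv} = \sum_{(u,v)\in E}(f^*_{uv}+f^*_{vu}) = \sum_{(u,v)\in E}|d^*_{uv}| = \|d^*\|_1,
\]
which is precisely the asserted equality. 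Notably this chain of equalities does not even require optimality of the dual variables $x^*$; it uses only that $f^*$ is an optimal flow, so the integral corollary follows from the same computation as in the theorem.

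I do not expect a genuine obstacle here, since the corollary is a direct corollary. The one point deserving care is making explicit that $\sum_{v\in V} b^d_v = 0$ (established in Remark \ref{rem:2}) so that \textup{Flow($d$)} is feasible and admits an optimal flow at all, and hence an integral one by the integrality theorem. Everything else is inherited from Theorem \ref{thm:theright-d}. In the write-up I would keep it brief: state that integrality of $d$ forces integrality of each $b^d_v$, apply the integrality theorem to get integral $f^*$, note that $d^*$ is then integer-valued, and finish with the displayed identity exactly as in the proof of the theorem.
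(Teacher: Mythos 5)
Your proposal is correct and follows essentially the same route as the paper: invoke the integrality theorem for minimum cost network flows (since integrality of $d$ forces each $b^d_v \in \ZZ$) to obtain an integral optimal flow $f^*$, and then conclude via Theorem \ref{thm:theright-d}. Your additional observations --- that $d^*$ is automatically integer-valued, that feasibility rests on $\sum_{v\in V} b^d_v = 0$, and that the key identity $f^*_{uv}+f^*_{vu}=|d^*_{uv}|$ from Remark \ref{optimal_flow_nec_cond} suffices without the dual variables --- are accurate refinements of the same argument, not a different one.
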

\begin{proof}
 The network flow theory (see Subsection 2.2 or \cite{AMO93}) guarantees the existence of an {\em integral} optimal solution $f^*$
 of Flow($d$), if $d$ is integral. The claim follows then from Theorem \ref{thm:theright-d}.
\end{proof}


\begin{remark} It can happen that $\|d^*\|_1 <\|d\|_1$. For example, consider a directed graph with vertices
$V=\{ u=(0,0), v=(0,1), w=(1,1), z=(1,0)\}$ and let
$d_{uv}=d_{vw}=1$ and $d_{zw}=-1$. Then $b^d_u=1$ , $b^d_z=-1$ and
$b^d_v=b^d_w=0$. An optimal flow $f^*$ is given by $f^*_{uz}=1$
and zero otherwise, so $d^*_{uz}=1$ and zero otherwise. One of the
optimal dual solutions is $x^*_u=0$, $x^*_v=x^*_z=-1$ and
$x^*_w=-2$. The common optimal value is  1 (recall that we have
taken the negative in the dual), and so $\|d^*\|_1=1$ while
$\|d\|_1=3$.
\end{remark}

We state next some necessary and sufficient conditions for $d=d^*$. These sufficient conditions are easy to check and, if satisfied,
yield $d^*$ without solving DualFlow($d$).

Recall that an edge $e \in \bar{E}$ has the form $e=(u,v)$ and
that, for $u=(u_1, u_2, \ldots, u_s)$, $v=(v_1, v_2, \ldots,
v_s)$, we have $| v_k-u_k |=1$ for some $k$ and $v_j=u_j$ for all
$j \not = k$. We say that $e$ is a {\em $k$-positive} edge, if
$v_k=u_k+1$, while if $v_k=u_k-1$ we call $e$ a {\em $k$-negative}
edge. A path $P$ in $\bar{G}$ is called {\em monotone} if, for a
fixed $k$, it either contains only $k$-positive edges or only
$k$-negative edges. The path $P$ in the support of $f^*$ consists
of the edge set $\{(u,v) \in P: f^*_{uv}>0\}$.

\begin{theorem}
 \label{thm:mon-path}
  Let $f^*$ be an optimal solution in \textup{Flow($d$)} for a graph $\bar{G}$.
  Then each path $P$ in the support of $f^*$ is monotone.
 \end{theorem}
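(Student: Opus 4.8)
The plan is to read Flow($d$) as a minimum cost flow problem in which \emph{every} edge of $\bar{E}$ has unit cost, and to exploit linear programming duality against DualFlow($d$). The guiding intuition is that, with unit costs, the total objective is the total ``flow length,'' so an optimal flow can only route mass along shortest paths; coordinatewise monotonicity of a support path should then be a purely combinatorial consequence of it being a lattice geodesic. Concretely, I would first fix the given optimal flow $f^*$ and, using Lemma \ref{lem:simple} together with Theorem \ref{thm:LPduality}, an optimal dual solution $x^*$ of DualFlow($d$). Strong duality gives complementary slackness between $f^*$ and $x^*$, and since the cost of each edge is $1$, the dual constraint attached to an edge $(u,v)$ is $x_u-x_v\le 1$. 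Complementary slackness forces this to be tight wherever flow is positive, so that $x^*_u-x^*_v=1$ for every $(u,v)$ with $f^*_{uv}>0$; that is, the potential $x^*$ drops by exactly one unit across each edge in the support of $f^*$.

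Next I would convert this into a statement about a support path $P\colon v_0\to v_1\to\cdots\to v_m$. Summing the tight equalities along $P$ yields $x^*_{v_0}-x^*_{v_m}=m$, the number of edges of $P$. On the other hand, dual feasibility $x^*_a-x^*_b\le 1$ on every edge of $\bar{E}$ implies that along \emph{any} path from $v_0$ to $v_m$ the total drop of $x^*$ is bounded by the length of that path; choosing a shortest such path gives $x^*_{v_0}-x^*_{v_m}\le \mathrm{dist}_{\bar{G}}(v_0,v_m)$. Since $P$ itself is a $v_0$--$v_m$ path of length $m$, we also have $\mathrm{dist}_{\bar{G}}(v_0,v_m)\le m$. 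Combining these, $m=\mathrm{dist}_{\bar{G}}(v_0,v_m)$, so $P$ is a shortest path in $\bar{G}$.

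It remains to see that a shortest path in this lattice graph cannot reverse direction in any coordinate. Each edge changes a single coordinate by $\pm 1$, so the length of a $v_0$--$v_m$ path is at least $\sum_{k}\lvert (v_m)_k-(v_0)_k\rvert$, with equality precisely when, for every $k$, all edges affecting coordinate $k$ point in the same direction -- which is exactly the monotonicity claimed in the theorem. As $P$ attains the shortest length, it must be monotone. I expect the last combinatorial step to be the main obstacle: one must confirm that $\mathrm{dist}_{\bar{G}}(v_0,v_m)$ really equals $\sum_{k}\lvert (v_m)_k-(v_0)_k\rvert$, i.e.\ that $\ell^1$-nearby vertices of $V$ are joined by monotone edge-paths inside $\bar{G}$, which requires some care near the boundary of the index box $V=\{-N-2,\ldots,0\}^s$. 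An equivalent and perhaps more transparent way to organize this step is as a direct exchange argument: a reversal in some coordinate $k$ would let one shift a unit of $f^*$ off the ``up'' and ``down'' edges onto a strictly shorter detour at the intermediate level, lowering the objective and contradicting optimality in the spirit of Remark \ref{optimal_flow_nec_cond}; producing that shortcut explicitly is exactly where the grid geometry must be invoked.
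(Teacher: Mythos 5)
Your proposal is correct, but it reaches the conclusion by a genuinely different route than the paper. The paper's proof is the direct exchange argument you sketch only in your final sentences: it represents a putative non-monotone support path $P$ with vertices $u^0,\ldots,u^m$ by the $m\times s$ step matrix with rows $u^i-u^{i-1}$, picks a column $k$ containing both $1$ and $-1$ in rows $i_1<i_2$ at minimal distance $|i_1-i_2|$ (so the rows in between are zero in column $k$), deletes those two rows to obtain a path $P'$ with the same endpoints, and reroutes one unit of flow from $P$ to $P'$; since the symmetric difference of $P$ and $P'$ is a cycle, flow balance is preserved while the cost drops by $2$, contradicting optimality --- no duality is used at all. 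Your primary argument instead invokes strong duality (Theorem \ref{thm:LPduality}, with existence of an optimal dual via Lemma \ref{lem:simple}): complementary slackness forces $x^*_u-x^*_v=1$ on every flow-carrying edge, telescoping makes each support path a geodesic in $\bar{G}$, and geodesics in the unit-step grid must be monotone. This is heavier machinery but buys extra structure the paper's proof does not state (support paths are shortest paths; the potential drops by exactly one unit along the flow). The step you honestly flag as the remaining obstacle does close, and in one line: $V=\{-N-2,\ldots,0\}^s$ is a product of intervals, so between any $u,w\in V$ one may adjust coordinates monotonically one at a time without leaving $V$, whence $\mathrm{dist}_{\bar{G}}(u,w)=\|w-u\|_1$ and a support path of length $m=\mathrm{dist}_{\bar{G}}(v_0,v_m)$ admits no coordinate reversal. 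One caveat you were right to worry about: this requires reading $E$ as the full grid graph on the box $V$; under the paper's literal definition of $E$ (tails confined to $\{-N-1,\ldots,0\}^s$) some boundary vertices have degree at most one, which would break your lemma --- but it would equally break the paper's own claim that the graph is connected and its implicit assumption that the shortened walk $P'$ stays in $\bar{G}$, so the full-grid reading is the operative one, and on it both proofs are valid.
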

\begin{proof}
 Assume that the support of $f^*$ contains a path $P$ which is not monotone. Say that
 $P$ has $m$ edges and that its vertices (in that order) are
 $u^0, u^1, \ldots, u^m \in V$. We represent $P$ by a $(0,1,-1)$-matrix $A$
 of size $m \times s$ whose $i-$th row is $u^i-u^{i-1}$. The definition of $E$
 assures that each entry in $A$ is either $0$, $1$ or $-1$.  Since $P$
is not monotone, $A$ contains a column with both $1$ and $-1$. We
choose such a column $k$ for which the rows $i_1$ and $i_2$
containing $1$ and $-1$ are such that $|i_1-i_2|$ is minimal; we
may assume $i_1<i_2$. Note that in these rows $i_1$ and $i_2$ the
only non-zero entries are in the $k-$th column.

Let the matrix $A'$ be obtained from $A$ by deleting rows $i_1$ and $i_2$. Then
$A'$ corresponds to a new path $P'$ having the same end vertices as $P$.
We may define a new flow $f'$ accordingly by replacing a flow of one unit
along $P$ by a flow of one unit along $P'$. Then, since the symmetric difference
between $P$ and $P'$ is a cycle, $f'$ satisfies the flow balance constraints. Moreover,
$\sum f'_{uv}= \sum f^*_{uv}-2$, contradicting that $f^*$ was optimal. Thus we have
shown that the support of $f^*$ only contains paths that are monotone.
\end{proof}

For $i=1,\ldots, s$ define $E_i=\{ (u,u-\epsilon_i) \in E \}$.
Then $\{E_i\ : \ i=1,\ldots,s\}$ is a partition of the edge set
$E$, i.e. the sets $E_i$ are disjoint and $E$ is the union of
$E_i$.

\begin{theorem}
 \label{thm:signed-d}

  Consider a graph $\bar{G}$. Let $\kappa_i \in \{-1,1\}$ for $1 \le i \le s$, and assume
  that $\sign(d_{uv}) \in \{0,\kappa_i\}$ for all $(u,v) \in E_i$, $1\le i \le s$.

 \begin{description}
  \item[$(i)$] The flow $f^*$ with
  $$
   (f^*_{uv}, f^*_{vu})=\left\{
                          \begin{array}{ll}
                            (d_{uv},\ 0), & \hbox{if } \kappa_i=1, \\
                            (0,-d_{uv}), & \hbox{if } \kappa_i=-1
                          \end{array}
                        \right. \  \hbox{for all } (u,v) \in E_i, \  i=1,\ldots,s,
  $$
  is  optimal for \textup{Flow($d$)} with optimal value $\|d\|_1$.
  \item[$(ii)$] The function $x^*: V \rightarrow \RR$ with $x^*(v)=-\sum_{i=1}^s \kappa_i v_i$ for $v=(v_1,v_2, \ldots, v_s) \in V$
  is optimal in \textup{DualFlow($d$)} with optimal value $\|d\|_1$.
\end{description}
 \end{theorem}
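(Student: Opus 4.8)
The plan is to verify directly that the explicitly prescribed $f^*$ and $x^*$ are feasible for \textup{Flow($d$)} and \textup{DualFlow($d$)}, respectively, that both attain objective value $\|d\|_1$, and then to invoke weak duality to conclude that each is optimal and that the common optimal value is $\|d\|_1$. No machinery beyond the network-flow duality recorded in Subsection~\ref{subsec:intro_LP} is required; the work lies almost entirely in the sign bookkeeping governed by the hypothesis $\sign(d_{uv}) \in \{0,\kappa_i\}$ on each block $E_i$.

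For part $(i)$ I would first check nonnegativity of $f^*$: on an edge $(u,v) \in E_i$ the hypothesis forces $d_{uv} \ge 0$ when $\kappa_i = 1$ and $d_{uv} \le 0$ when $\kappa_i = -1$, so in either case the two prescribed values $f^*_{uv}$ and $f^*_{vu}$ are nonnegative. The key observation is that, whichever value $\kappa_i$ takes, the construction yields $d^*_{uv} := f^*_{uv} - f^*_{vu} = d_{uv}$ for every $(u,v) \in E$. Consequently the purely algebraic identity contained in the chain of equalities in the proof of Theorem~\ref{thm:theright-d} --- that the net flow out of a vertex $v$, computed in $\bar{G}$, equals $b^{d^*}_v$ --- gives net-outflow$(v) = b^{d^*}_v = b^d_v$, which is exactly the flow-balance constraint of \textup{Flow($d$)}. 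Hence $f^*$ is feasible. Its objective value is $\sum_{(u,v) \in E}(f^*_{uv} + f^*_{vu}) = \sum_{(u,v) \in E} |d_{uv}| = \|d\|_1$, since on each edge one of the two values is $|d_{uv}|$ and the other is $0$.

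For part $(ii)$, feasibility of $x^*$ is immediate: $x^*$ is linear in $v$ with constant increment $\kappa_i$ across the edges of $E_i$, so for any $(u,v)$ with $v = u - \epsilon_i$ one has $x^*_u - x^*_v = \kappa_i \in \{-1,1\}$, whence $|x^*_u - x^*_v| = 1$ and all constraints $x_v - x_u \le 1$ on $\bar{E}$ hold. For the objective I would use the identity $\sum_{v} b^d_v x^*_v = \sum_{(u,v)\in E} d_{uv}(x^*_u - x^*_v)$ already used to identify \eqref{eq:LPd} with \textup{DualFlow($d$)}; here an additive constant in $x^*$ is irrelevant because $\sum_v b^d_v = 0$ by \eqref{eq:sum0}. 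With $x^*_u - x^*_v = \kappa_i$ on $E_i$ and the sign hypothesis, which yields $\kappa_i d_{uv} = |d_{uv}|$, each edge contributes $d_{uv}(x^*_u - x^*_v) = |d_{uv}|$, so the objective sums to $\|d\|_1$.

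Finally, weak duality for the primal--dual pair \textup{Flow($d$)}/\textup{DualFlow($d$)} asserts that every feasible flow value is at least every feasible dual value. Since $f^*$ and $x^*$ are feasible with the same value $\|d\|_1$, the optimal value of the minimization \textup{Flow($d$)} is squeezed, from below by the optimal value of \textup{DualFlow($d$)} and from above by $\|d\|_1$, so all three coincide with $\|d\|_1$ and both $f^*$ and $x^*$ are optimal. The only genuinely delicate step is the flow-balance feasibility of $f^*$ in part $(i)$; once one recognizes that $d^* = d$ so that Theorem~\ref{thm:theright-d}'s identity applies verbatim, everything else reduces to routine sign-tracking.
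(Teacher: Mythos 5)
Your proposal is correct and follows essentially the same route as the paper: exhibit the explicit primal--dual pair, verify feasibility of $f^*$ (nonnegativity via the sign hypothesis, flow balance since $f^*_{uv}-f^*_{vu}=d_{uv}$ on every edge) and of $x^*$ (since $|x^*(u)-x^*(v)|=1$ on each edge of $\bar{E}$), show both objectives equal $\|d\|_1$, and conclude by weak duality. Your only departures are cosmetic: you justify the flow-balance step by citing the divergence identity from the proof of Theorem~\ref{thm:theright-d} with $d^*=d$, where the paper simply asserts that the divergence of $f^*$ at each vertex equals $b^d_v$, and you spell out the nonnegativity check that the paper calls clear.
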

\begin{proof} To prove $(i)$ and $(ii)$ we use the standard technique,
based on weak duality (see Subsection 2.2 and  \cite{Vanderbei08}). It suffices
to show that $f^*$ and $x^*$ defined in $(i)$ and $(ii)$,
respectively, are feasible and
$$
 \sum_{v \in V} b^{d}_v x^*(v)=\sum_{(u,v) \in \bar{E}} f^*_{uv}.
$$
Let $i=1,\dots,s$ and $e=(u,v) \in E_i$. Then
 \[
       x^*(u)-x^*(v)=x^*(u)-x^*(u-\epsilon_i)=\kappa_i u_i -\kappa_i (u_i -1)=\kappa_i.
 \]
This shows that $|x^*(u)-x^*(v)|=1$ for each edge $(u,v) \in
\bar{E}$, so $x^*$ is feasible in \textup{DualFlow($d$)}.
 Clearly $f^*$ is feasible in \textup{Flow($d$)} as $f^*_{uv}\ge 0$ and
 its divergence in a vertex
 $v \in V$ equals the divergence of $d$ in $v$ which is $b^d_v$.
 Moreover, we have
  \begin{eqnarray*}
     \sum_{v \in V} b^{d}_v x^*(v) &=&
         \sum_{v \in V} (\sum_{u:(v,u) \in E} d_{vu}-\sum_{u:(u,v) \in E} d_{uv})x^*(v)  \\*[\smallskipamount]
           &=& \sum_{(u,v) \in E} d_{uv} (x^*(u) - x^*(v)) \\*[\smallskipamount]
           &=& \sum_{i=1}^s \sum_{(u,v) \in E_i} d_{uv} (x^*(u) - x^*(v)) \\*[\smallskipamount]
           &=& \sum_{i=1}^s \sum_{(u,v) \in E_i} d_{uv} \kappa_i = \sum_{i=1}^s \sum_{(u,v) \in E_i} |d_{uv}| \\*[\smallskipamount]
           &=& \sum_{i=1}^s \sum_{(u,v) \in E_i} (f^*_{uv}+ f^*_{vu}) = \sum_{(u,v) \in \bar{E}} f^*_{uv}
  \end{eqnarray*}
 Therefore, by duality (Theorem \ref{thm:LPduality}) it follows that $f^*$ is optimal in \textup{Flow($d$)}, $x^*$ is optimal in \textup{DualFlow($d$)} and, finally, that the optimal value equals $\|d\|_1$.
 \end{proof}

 \begin{corollary}
 \label{cor:nonneg-d}
  Consider a graph $\bar{G}$, and let $d$ be nonnegative. Define $f^*_{uv}=d_{uv}$
  and $f^*_{vu}=0$ for each $(u,v) \in E$ and $x^*(v)=-\sum_{i=1}^s v_i $  for $v=(v_1,v_2, \ldots, v_s) \in V$.
Then $f^*$ is optimal in   \textup{Flow($d$)}, $x^*$ is optimal in
\textup{DualFlow($d$)}
  and the optimal value equals $\|d\|_1$.
 \end{corollary}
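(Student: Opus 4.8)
The plan is to derive Corollary~\ref{cor:nonneg-d} as a direct specialization of Theorem~\ref{thm:signed-d}. The key observation is that nonnegativity of $d$ means precisely that $\sign(d_{uv}) \in \{0,1\}$ for \emph{every} edge $(u,v) \in E$, so the hypothesis of Theorem~\ref{thm:signed-d} is met with the uniform choice $\kappa_i=1$ for all $i=1,\ldots,s$. First I would verify this matching of hypotheses: since each $E_i \subseteq E$, nonnegativity on $E$ gives $\sign(d_{uv}) \in \{0,1\} = \{0,\kappa_i\}$ on each $E_i$ with $\kappa_i=1$, as required.

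With $\kappa_i=1$ throughout, I would then simply read off the conclusions of Theorem~\ref{thm:signed-d}. In part $(i)$, the case $\kappa_i=1$ gives $(f^*_{uv},f^*_{vu})=(d_{uv},0)$ for all $(u,v)\in E_i$, and taking the union over $i$ recovers exactly the prescription $f^*_{uv}=d_{uv}$, $f^*_{vu}=0$ stated in the corollary. In part $(ii)$, the potential $x^*(v)=-\sum_{i=1}^s \kappa_i v_i$ collapses to $x^*(v)=-\sum_{i=1}^s v_i$, again matching the corollary verbatim. Theorem~\ref{thm:signed-d} then guarantees that $f^*$ is optimal in Flow($d$), that $x^*$ is optimal in DualFlow($d$), and that the common optimal value is $\|d\|_1$, which is the entire content of the corollary.

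Since this is a pure specialization, there is essentially no obstacle; the only point deserving a line of care is confirming that $d \ge 0$ really does force $\sign(d_{uv}) \in \{0,\kappa_i\}$ simultaneously across all partition classes $E_i$ with a single common sign choice, which is immediate here because the common value $\kappa_i=1$ works uniformly regardless of which class an edge lies in. Thus the proof reduces to the single sentence that the corollary is the case $\kappa_1=\cdots=\kappa_s=1$ of Theorem~\ref{thm:signed-d}.
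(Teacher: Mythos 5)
Your proposal is correct and is exactly the paper's proof: the authors also set $\kappa_1=\cdots=\kappa_s=1$ and invoke Theorem~\ref{thm:signed-d}. Your extra verification that nonnegativity of $d$ gives $\sign(d_{uv})\in\{0,1\}=\{0,\kappa_i\}$ on each $E_i$ is a harmless elaboration of the same one-line specialization.
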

 \begin{proof}
  Let $\kappa_i=1$ for each $i \le s$ and apply Theorem \ref{thm:signed-d}.
 \end{proof}

We conclude this subsection with an example of an optimal first difference mask $\Bb^*$
that satisfies
$$
 \| S_{\Bb^*}\|_\infty = \|S_{\Bb^*} |_\nabla\|_\infty,
$$
but does not satisfy
$$
 \| S^2_{\Bb^*}\|_\infty \not= \|S^2_{\Bb^*} |_\nabla\|_\infty.
$$

\begin{example} \label{ex:iterated_mask}
 Let $M=2I$. Consider a bivariate scalar subdivision scheme with the mask
 $$
 \ab=
      \begin{array}{ccccccc}
             & \vdots & \vdots & \vdots & \vdots & \vdots &\\
       \dots & 0      & 0      & \frac{1}{4}      & \frac{1}{2}      & \frac{1}{4} & \dots   \\ \\
       \dots & 0      & \frac{1}{2}      & 1      & \frac{1}{2}      & 0 & \dots\\ \\
       \dots & {\bf \frac{1}{4}}      & \frac{1}{2}      & \frac{1}{4}      & 0      & 0 & \dots \\
             & \vdots & \vdots & \vdots & \vdots & \vdots &
      \end{array}
 $$
 supported on $\{0,\ldots,3\}^2$. The bold entry corresponds to the index $(0,0)$. In this case, the directed graph $G=(V,E)$ has
 vertices $V=\{-5,\ldots,0\}^2$ and the edge set $E=\{(\beta, \beta-\epsilon_\ell)\ : \ \beta \in \{-4,\ldots,0\}^2, \ \ell=1,2\}$. The nonzero
part of the optimal mask $\Bb^* \in \ell_0^{2 \times 2}(\ZZ^2)$
for the first difference scheme is
$$
 \Bb^*= \frac{1}{4}
      \begin{array}{cccccc}
         & \left(\begin{array}{cc}0&0\\0&0 \end{array}\right) &
               \left(\begin{array}{cc}0&0\\0&0 \end{array}\right) &
               \left(\begin{array}{cc}0&0\\ 1 &0  \end{array}\right)&
               \left(\begin{array}{cc}0&0\\0&0  \end{array}\right)&   \\
         & \left(\begin{array}{cc}0&0\\1&0 \end{array}\right) &
               \left(\begin{array}{cc}0&0\\0&0 \end{array}\right) &
               \left(\begin{array}{cc}1&0\\-1&0  \end{array}\right)&
               \left(\begin{array}{cc}1&0\\0&0 \end{array}\right)&    \\
         & \left(\begin{array}{cc}0&0\\-1&0 \end{array}\right) &
               \left(\begin{array}{cc}2&0\\0&0 \end{array}\right) &
               \left(\begin{array}{cc}2&0\\0&2  \end{array}\right)&
               \left(\begin{array}{cc}0&0\\0&2  \end{array}\right)&   \\
        & {\bf \left(\begin{array}{cc}{\bf 1}&{\bf 0}\\{\bf 0}&{\bf 1} \end{array}\right)} &
               \left(\begin{array}{cc}1&0\\0&2 \end{array}\right) &
               \left(\begin{array}{cc}0&0\\0&1  \end{array}\right)&
               \left(\begin{array}{cc}0&0\\0&0  \end{array}\right)&    \\
      \end{array}
$$
The bold entry again corresponds to the index $(0,0)$.
The nonzero entries of its second iterated mask $(B^*)^{[2]}$ for the
coset $\varepsilon=(2,1)$  are
\begin{eqnarray*}
   (B^*)^{[2]}(2,5) &=& \left(\begin{array}{cc}0&0\\ \frac{1}{8}&0 \end{array}\right), \quad
   (B^*)^{[2]}(6,5) = \left(\begin{array}{cc} \frac{1}{8}&0\\- \frac{1}{16}&0 \end{array}\right)  \\
   (B^*)^{[2]}(2,1) &=& \left(\begin{array}{cc} \frac{1}{8}&0\\- \frac{1}{16}& \frac{1}{8} \end{array}\right) \quad \hbox{and} \quad
   (B^*)^{[2]}(6,1) = \left(\begin{array}{cc}0&0\\0& \frac{1}{8} \end{array}\right).
\end{eqnarray*}
On the contrary, the corresponding linear optimization problem for $\varepsilon=(2,1)$ and $j=2$ in \eqref{eq:LPd}
 and with nonzero values
\begin{eqnarray*}
 d_{(0,0)(-1,0)}&=&d_{(-1,-1)(-2,-1)}=-\frac{1}{16}, \\
 d_{(0,0)(0,-1)}&=& d_{(-1,0)(-1,-1)}= d_{(0,-1)(-1,-1)}=\frac{1}{8},
\end{eqnarray*}
yields $ z_C^*=\frac{6}{16}$. And, indeed, the corresponding nonzero entries of the optimal
second iterated mask constructed from $d^*$ for this coset are
$$
      \begin{array}{ccccc}
             & \vdots & &\vdots & \\
        \dots & \left(\begin{array}{cc}0&0\\ \frac{1}{16}&0 \end{array}\right) & \dots&
               \left(\begin{array}{cc} \frac{1}{8}&0\\- \frac{1}{16}&0 \end{array}\right) & \dots   \\
           & \vdots & & \vdots & \\
        \dots & \left(\begin{array}{cc} \frac{1}{8}&0\\0& \frac{1}{16} \end{array}\right) & \dots&
               \left(\begin{array}{cc}0&0\\0& \frac{3}{16} \end{array}\right) & \dots   \\
      & \vdots & & \vdots & \\
      \end{array}
$$
\end{example}

\subsection{Flow algorithm} \label{section:algorithms_and_examples}

In this section we present the  {\em successive
shortest path algorithm} (\cite{AMO93}) for our minimum cost network flow problems.  This algorithm determines the optimal flow $f^*$
in \textup{Flow($d$)}, which defines an optimal mask of the first
difference schemes as stated in Theorem \ref{thm:theright-d}. An
advantage of using this particular algorithm is that it guarantees
an integral solution, if the input is integral. For a given flow
$f$, the algorithm defines the so-called {\em residual network}
$G(f)$ which consists of
\begin{itemize}
\item[-] the given vertices $V$ of the graph $\bar{G}$,
\item[-] all edges $e =(u,v)\in \bar{E}$ and their copies, called "backward" edges
$e^{-1}$ whose direction is opposite to $e$. These edges are created only for the edges 
$e \in \bar{E}$ that consitute the shortests 
paths determined by  Dijkstra's algorithm in step 2.2. of the Flow algorith given below.
\end{itemize}
In the residual network $G(f)$ each edge is assigned a capacity
$r_{(u,v)}=f_{uv}$ for $e =(u,v)\in \bar{E}$ and for backward
edges
$$
 r_{(u,v)^{-1}}=
  \begin{cases}
    f_{uv} & \text{if}, f_{uv} >0, \\
    0 & \text{otherwise}.
  \end{cases}
$$
For a given function $\pi: V \rightarrow \mb{Z}$, each edge in
$G(f)$ is also assigned the so-called {\em reduced cost}
\[
     c^{\pi}_{(u,v)}=1-\pi(u)+\pi(v), \quad (u,v) \in \bar{E},
\]
and for the backwards edges $(u,v)^{-1}$, $(u,v) \in \bar{E}$,
\[
   c^{\pi}_{(u,v)^{-1}}=-1-\pi(u)+\pi(v).
\]
The purpose of introducing the backward edges is that they allow
us to decrease the flow through the original edges in $\bar{E}$ by
sending it along the corresponding backward edges.

The algorithm starts with the zero flow $f=0$ and performs a
finite number of iterations consisting of adding the flow along
the shortest path between the end vertices of an edge $(u,v)$
where $d_{uv}$ is nonzero. The shortest path is computed in the
residual network $G(f)$ using the reduced costs $c^{\pi}$. The
function $\pi$ is introduced to ensure that the reduced costs stay
nonnegative, which makes the shortest path calculation more
efficient, as we can use Dijkstra's algorithm. For further
explanation and details on the successive shortest path algorithm,
see \cite{AMO93}.

\vspace{0.3cm}

{\scriptsize

\noindent {\bf Flow algorithm:}

\vspace{0.2cm}
\noindent {\bf Input: } a function $d: E \rightarrow \mb{Z}$.

\begin{itemize}
\item[1.]  {\bf Initial step:}  Compute $b^d_v$ for $v \in V$. Set
$\epsilon(v):=b^d_v$ for  $v \in V$. Define $\mc{E}_+=\{v \in V:
\epsilon(v)>0\}$ and $\mc{E}_-=\{v \in V: \epsilon(v)<0\}$.
Initialize the flow $f:=0$, define the residual network $G(f)$,
and set $\pi(v)=0$ for each $v \in V$.

\item[2.]  {\bf While} $\mc{E}_+ \not = \emptyset$ {\bf do}
 \begin{enumerate}
   \item[2.1]  Choose any $v_+ \in \mc{E}_+$ and $v_- \in \mc{E}_-$.
   \item[2.2]  Dijkstra's algorithm: uses as edge lengths the reduced costs
   $c^{\pi}_{(u,v)}=1-\pi(u)+\pi(v)$ to compute the shortest path distances
   $\delta(v)$ from $v_+$ to each other vertex $v \in V$;
   determines the shortest path $P$ from $v_+$ to $v_-$.
   \item[2.3] Update $\pi$ by $\pi:= \pi-\delta$, compute
   $\gamma:=\min\{\epsilon(v_+), -\epsilon(v_-), \min\{r_{ij}: (i,j) \in P\} \}$ and
   augment $f$ by adding a flow of value $\gamma$ along the path $P$.
   Update the residual network $G(f)$, i.e. update the reduced
   costs; set $\epsilon(v_+)=\epsilon(v_+)-\gamma$,
   $\epsilon(v_-)=\epsilon(v_-)+\gamma$; update $\mc{E}_+$,
   $\mc{E}_-$.
 \end{enumerate}
 \end{itemize}
 \noindent  {\bf Output:} optimal flow $f^*$ and optimal dual variable $x:=-\pi$.
}

\begin{theorem}
 \label{thm:flow-alg-thm}
  The flow algorithm  solves both  \textup{Flow($d$)} and
  the dual problem  \textup{Dualflow($d$)}. Its complexity is $O(Bn^2)$ where
  $B=(1/2)\sum_v |b^d_v|$ is an upper bound on the number of iterations,
  and $O(n^2)$ is the complexity of Dijkstra's algorithm for solving the shortest path problem with nonnegative costs in a graph with $n$  vertices.
\end{theorem}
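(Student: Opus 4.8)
The plan is to recognize the Flow algorithm as the classical \emph{successive shortest path} method and to establish its correctness through the standard \emph{reduced cost optimality} invariant; the complexity bound then follows by a short counting argument. Throughout I write $n=|V|$ and use the fact, recorded in \eqref{eq:sum0}, that $\sum_{v \in V} b^d_v=0$, so that the total positive supply and the total negative demand agree and each equals $B=(1/2)\sum_v |b^d_v|$.

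The core of the correctness proof is the invariant that, at the beginning of every iteration, the current flow $f$ and potential $\pi$ satisfy $c^\pi_e \ge 0$ for every edge $e$ of the residual network $G(f)$. I would first check this at initialization: with $f=0$ and $\pi=0$ the residual network carries only the forward edges of $\bar E$, on which $c^\pi_{uv}=1-\pi(u)+\pi(v)=1\ge 0$. For the inductive step, let $\delta$ be the shortest path distances from $v_+$ returned by Dijkstra with the nonnegative lengths $c^\pi$; these obey the triangle inequality $\delta(v)\le \delta(u)+c^\pi_{uv}$ on every residual edge, with equality along the edges of any shortest path. After the update $\pi:=\pi-\delta$ a residual edge has reduced cost $c^\pi_{uv}+\delta(u)-\delta(v)\ge 0$, and this value is exactly $0$ on the edges of the augmenting path $P$; hence the backward edges created when $f$ is augmented along $P$ again have zero reduced cost and the invariant is restored. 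Verifying this preservation --- in particular that the newly introduced backward edges do not destroy nonnegativity --- is the main obstacle, and I would invoke the corresponding lemma of \cite{AMO93} for the routine sign bookkeeping.

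I would then settle termination and optimality. Because $\bar G$ is symmetric and its underlying undirected graph is connected, $\bar G$ is strongly connected, and its forward edges, being uncapacitated in Flow($d$), remain present in every $G(f)$, so a residual path from $v_+$ to $v_-$ exists whenever both $\mathcal{E}_+$ and $\mathcal{E}_-$ are nonempty; since $\sum_v \epsilon(v)=0$ is preserved, a nonempty $\mathcal{E}_+$ forces $\mathcal{E}_-$ to be nonempty, so every iteration is well defined. Augmenting by $\gamma=\min\{\epsilon(v_+),-\epsilon(v_-),\min_{(i,j)\in P} r_{ij}\}$ lowers the total remaining excess $\sum_v \max\{\epsilon(v),0\}$ by exactly $\gamma$. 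When the loop halts we have $\mathcal{E}_+=\emptyset$, and since the excesses sum to $0$ this yields $\epsilon(v)=0$ for all $v$; thus the divergence of $f$ equals $b^d_v$ at each vertex and $f$ is feasible for Flow($d$). The invariant is precisely the complementary slackness condition linking Flow($d$) and DualFlow($d$) through $x=-\pi$, so by the duality in Theorem \ref{thm:LPduality} the output $f^*$ is optimal for Flow($d$) and $x=-\pi$ is optimal for DualFlow($d$).

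The complexity follows by counting iterations. Since $d$ is integral, all flows and hence every augmentation amount $\gamma$ remain integral, so each iteration reduces the total remaining excess by at least $1$. As this excess begins at $B$ and never goes negative, there are at most $B$ iterations. Each iteration runs one Dijkstra computation with nonnegative reduced costs at cost $O(n^2)$ on $n$ vertices, together with $O(n)$ work to update $\pi$, $f$ and the excess sets. Multiplying the per-iteration cost by the iteration bound gives the stated complexity $O(Bn^2)$.
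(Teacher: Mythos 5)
Your proposal is correct and takes essentially the same route as the paper: the paper's proof consists of citing \cite{AMO93} for the correctness of the successive shortest path method and observing that integrality of $d$ makes each augmentation a positive integer, giving at most $B$ iterations of an $O(n^2)$ Dijkstra computation. Your write-up simply expands the cited material --- the reduced-cost nonnegativity invariant, termination via the strong connectivity of $\bar{G}$ and conservation of excess, and optimality through complementary slackness with $x=-\pi$ --- and your iteration count matches the paper's integrality argument exactly.
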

\begin{proof}
 The correctness of the general algorithm is shown in \cite{AMO93}.  The complexity statement follows from the
 fact that in each iteration, by integrality of $d$, the flow is augmented by a
 positive integer.
\end{proof}

The next simple example illustrates the flow algorithm. It also
stresses the necessity of using the algorithm for finding the
optimal flow $f^*$ even for seemingly simple examples of our very
special network flow problems.

\begin{example}  Consider the graph $\bar{G}$ with $s=2$ and assume
$d$ is such that $b^d_v=1$ when $v \in \{(0,0),(-2,-2)\}$,
$b^d_v=-1$ when $v \in \{(-1,-1),(-3,-3)\}$, and $b^d_v=0$ otherwise.
Initially, in the flow algorithm,
$$
 f=0, \quad \mc{E}_+=\{(0,0),(-2,-2)\}, \quad
 \mc{E}_-=\{(-1,-1),(-3,-3)\},
$$
and we (may) choose $v_+=(-2,-2)$ and
$v_-=(-1,-1)$. A shortest path $P$ from $v_+$ to $v_-$ consists of the nodes
$(-2,-2)$, $(-1,-2)$, $(-1,-1)$ and it has cost 2. Note that the updated
edge cost for each  edge in $P$ is $-1$. In the next, and final,
iteration, we (must) choose $v_+=(0,0)$ and $v_-=(-3,-3)$ the shortest path
$P'$ consists of the nodes
$(0,0)$, $(0,-1)$, $(-1,-1)$, $(-1,-2)$, $(-2,-2)$, $(-2,-3)$,
$(-3,-3)$. As a result, the flow cancels out on $P$, and we have an
optimal flow with $f=1$ on four edges so the optimal value is $4$.

This example shows that the obvious heuristic method of
successively adding a shortest path, while maintaining previous paths,
may go wrong. Doing so  we would get a solution with two paths, one
of length $2$ and the other of length $6$, so a total cost of $8$, while
the optimal value is $4$.
\end{example}


\section{Optimal higher order difference schemes} \label{section:optimal_mask_2_multi}


In this section we investigate the existence of optimal masks for
higher order difference schemes used for studying the regularity
of subdivision in the scalar case, i.e. $n=1$. The vector case is
more technical, but the computation of the restricted norms we
consider here and the derivation of the optimal difference schemes
are conceptually very similar to what we do in the scalar case.
The unconvinced reader is referred to \cite{C11} and Example
\ref{ex:vector}.

The smoothness analysis of $S_\ab$ is based on the spectral properties of the higher order difference schemes
$S_{\Bb_k}$, $k \ge 2$, derived from $S_\ab$. In our notation, we have
$\Bb_1=\Bb$, where $\Bb$ is the first difference scheme from section \ref{sec:1_diff_scalar}. The $k-th$ order backward difference operator
$\nabla^k : \ell(\ZZ^s) \rightarrow \ell^{N_{s,k}}(\ZZ^s)$,
$N_{s,k}=\left( \begin{array}{c} s+k-1 \\ s-1 \end{array}\right)$, is defined by
\begin{equation}\label{def:nabla_k}
 \nabla^k=\left( \nabla_1^{\mu_1} \dots \nabla_s^{\mu_s}\right)_{\mu=(\mu_1, \ldots, \mu_s) \in \NN_0^s \atop |\mu|=k},
\end{equation}
where, for $\ell =1, \ldots, s$,
$$
 \nabla_\ell^{\mu_\ell}=\nabla_\ell \nabla_\ell^{\mu_\ell-1}, \quad \mu_\ell \in \NN, \quad \nabla_\ell^0=\hbox{id}.
$$
The $k-th$ order difference schemes $S_{\Bb_k}$ satisfy
\begin{equation}\label{id:Sa=Sbk}
 \nabla^k S^r_\ab= S_{\Bb_k}^r \nabla^k, \quad r \in \NN.
\end{equation}
We denote the entries of the matrices $B_k^{[r]}(\alpha)$ by
$B_{k,j,\mu}^{[r]}(\alpha)$, $j=1, \dots, N_{s,k}$, and
$\mu=(\mu_1, \dots, \mu_s)$ matches the ordering of
$\nabla_1^{\mu_1} \dots \nabla_s^{\mu_s}$ in $\nabla^k$. By
\cite{C11}, the study of the spectral properties of
$S_{\Bb_k}|_{\nabla^k}$ leads to computation of the restricted
norms
$$
 \|S^r_{\Bb_k}|_{\nabla^k}\|_\infty=\max_{\|\nabla^k \cb |_K\|_\infty=1} \max_{\varepsilon \in \Xi_r}
 \left\| \sum_{\beta \in K} B^{[r]}_k(\varepsilon -M^r \beta) (\nabla^k \cb)(\beta) \right\|_\infty,
$$
where $K=\{-N-k, \ldots,0\}^s$. Let $r \in \NN$,
$j=1,\dots,N_{s,k}$ and $\varepsilon \in \Xi$. See \cite{C11} for details. The linear
constraints $ \|\left(\nabla^k \cb  \right) (\beta) \|_\infty \le
1$ for $\beta \in K$ do not allow us to interpret the linear
optimization problem
\begin{equation} \label{def:lin_prog_3}
 \begin{array}{lcl}
   \max &  \displaystyle \sum_{\beta \in K} \sum_{\mu \in \NN_0^s \atop |\mu|=k} B_{k,j,\mu}^{[r]}(\varepsilon-M^r\beta) (\nabla_1^{\mu_1}\ldots
  \nabla_s^{\mu_s} \cb)(\beta) \\
   \mbox{\rm subject to} \\
        & -1 \le \left(\nabla_1^{\mu_1} \ldots  \nabla_s^{\mu_s} \cb \right)(\beta) \le 1, \quad \beta \in K, \quad \mu \in \NN_0^s, \quad |\mu|=k,
 \end{array}
\end{equation}
as a network flow problem, compare with \eqref{eq:MCNF}. However, we can still show that for
each $r \in \NN$ there exists an optimal mask $\Bb^*$ such that
$$
 \| S^r_{\Bb_k}|_{\nabla^k}\|_\infty= \| S_{\Bb^*}|_{\nabla^k}\|_\infty=\| S_{\Bb^*}\|_\infty
$$
and
$$
 \nabla^k S^r_{\Ab}= S^r_{\Bb_k} \nabla^k = S_{\Bb^*} \nabla^k.
$$
Denote by $\bone$ and by $\bnull$  vectors of all ones and all
zeros, respectively. The problem in \eqref{def:lin_prog_3} is
equivalent to
\begin{equation}\label{eq:LPd_k}
 z^*=\max\{ d^T (\Delta x) \ : \ -\bone \le \Delta x \le \bone\}
\end{equation}
with appropriately defined vector $d \in \RR^{|N+k|^s}$ of the corresponding entries of $B_{k,j,\mu}^{[r]}(\varepsilon-M^r\beta)$ in the objective function and the matrix $\Delta$ reflecting the linear constraints in \eqref{def:lin_prog_3}.

\begin{theorem} \label{th:existence_optimal_mask_general_case} There exists
a vector $d^* \in \RR^{N_{s,k}(N+k+1)^s}$ such that
the solution of \eqref{eq:LPd_k} satisfies
$$
 z^*=\|d^*\|_1.
$$
\end{theorem}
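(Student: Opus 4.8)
The plan is to run the same linear-programming duality argument that underlies Theorem \ref{thm:theright-d}; the essential point is that, although the constraint matrix $\Delta$ in \eqref{eq:LPd_k} is no longer the node--arc incidence matrix of a graph (so the network-flow picture is lost), strong LP duality by itself already forces the optimal value to be an $\ell_1$-norm. First I would put \eqref{eq:LPd_k} into standard primal form: using $d^T(\Delta x)=(\Delta^T d)^T x$ and writing the box as the two blocks $\Delta x\le\bone$ and $-\Delta x\le\bone$, the problem becomes
\[
  z^*=\max\{(\Delta^T d)^T x \ : \ \Delta x\le\bone,\ -\Delta x\le\bone\}.
\]
This LP is feasible, since $x=\bnull$ gives $\Delta x=\bnull$, and it is bounded above by $\|d\|_1$, because $\|\Delta x\|_\infty\le 1$ implies $d^T(\Delta x)\le\sum_i|d_i|=\|d\|_1$, exactly the estimate of Lemma \ref{lem:simple}. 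Hence, by the duality theorem quoted in Subsection \ref{subsec:intro_LP}, the primal and its dual both attain a common finite optimal value.

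Next I would dualize. Assigning multipliers $p\ge\bnull$ to $\Delta x\le\bone$ and $q\ge\bnull$ to $-\Delta x\le\bone$ yields
\[
  z^*=\min\{\bone^T(p+q)\ :\ \Delta^T(p-q)=\Delta^T d,\ p,q\ge\bnull\}.
\]
The key step is that, once $p-q$ is fixed, $\bone^T(p+q)$ is minimized coordinatewise by zeroing the smaller of $p_i,q_i$, so that $p_i+q_i=|(p-q)_i|$ and $\bone^T(p+q)=\|p-q\|_1$. Thus the dual collapses to $\min\{\|d+w\|_1 : w\in\ker\Delta^T\}$, a minimum-$\ell_1$ representative of the coset $d+\ker\Delta^T$. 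Setting $d^*:=p^*-q^*$ for an optimal dual pair $(p^*,q^*)$, the equality constraint gives $\Delta^T d^*=\Delta^T d$, while the optimal value reads
\[
  z^*=\bone^T(p^*+q^*)=\|p^*-q^*\|_1=\|d^*\|_1,
\]
the desired identity. Note that $d^*=p^*-q^*$ is the exact analogue of $d^*_{uv}=f^*_{uv}-f^*_{vu}$ in Theorem \ref{thm:theright-d}, and that $\Delta^T d^*=\Delta^T d$ says $d^*$ induces the same objective functional on $\nabla^k\ell(\ZZ^s)$ as $d$; hence $d^*$ is a legitimate $k$-th order difference mask $\Bb^*$ with $\nabla^k S^r_\ab=S_{\Bb^*}\nabla^k$, and $z^*=\|d^*\|_1$ becomes $\|S_{\Bb^*}\|_\infty=\|S_{\Bb^*}|_{\nabla^k}\|_\infty$.

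I expect the only real work to lie in the bookkeeping rather than in the LP theory: one must verify that the entries of $d^*$, reassembled from $(p^*,q^*)$, are correctly indexed by the pairs $(\mu,\beta)$ with $|\mu|=k$ and $\beta\in K$, so that $d^*\in\RR^{N_{s,k}(N+k+1)^s}$ and $d^*$ genuinely furnishes the mask coefficients $B^*_{k,j,\mu}(\varepsilon-M^r\beta)$ consistent with \eqref{id:Sa=Sbk}. Since $\Delta^T d^*=\Delta^T d$ guarantees that passing from $d$ to $d^*$ alters neither the objective functional nor the difference relation, this identification is routine. One caveat distinguishes this from the first-order setting: because $\Delta$ encodes iterated differences and already contains entries such as $\pm 2$ for pure second differences, it is not totally unimodular when $k\ge 2$, so the integrality theorem of Subsection \ref{subsec:intro_LP} no longer applies and $d^*$ need not be integral---only rational, when $d$ is, which still suffices for the analytic convergence arguments sought in Section \ref{sec:optimal_1}.
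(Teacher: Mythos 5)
Your proof is correct and is essentially the paper's own argument: you rewrite \eqref{eq:LPd_k} in standard form, dualize with split multipliers $p,q\ge\bnull$, and use the disjoint-support (complementarity) observation to collapse $\bone^T(p+q)$ to $\|p-q\|_1$, arriving at exactly the paper's reduction $z^*=\min\{\|d-g\|_1 \ : \ g^T\Delta=\bnull\}$ with $d^*=d-g=p^*-q^*$. Your side remarks --- attainment via feasibility of $x=\bnull$ and the bound $z^*\le\|d\|_1$, the identification of $d^*$ with a mask $\Bb^*$ via $\Delta^T d^*=\Delta^T d$, and the failure of total unimodularity (hence of guaranteed integrality) of $\Delta$ for $k\ge 2$ --- go slightly beyond the paper's terse proof but are consistent with its surrounding discussion.
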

\begin{proof}
 Note that
 $$
  z^*=\max\{ \left(d^T \Delta \right) x \ : \  \left( \begin{array}{rr} \Delta \\ -\Delta \end{array}\right) x \le
  \left( \begin{array}{rr} \bone \\ -\bone \end{array}\right)\}.
 $$
 By duality \cite{Schrijver}, we get
\begin{eqnarray*}
 z^*&=&\min \{ \bone^T (w+y) \ : \ \left( \begin{array}{rr} w^T &y^T \end{array}\right)  \left( \begin{array}{rr} \Delta \\ -\Delta
\end{array} \right) =d^T \Delta, \ w,y \ge \bnull \}\\
 &=&\min \{ \bone^T (w+y) \ : \ ( d+y-w)^T \Delta =\bnull,\  w,y \ge \bnull \}.
\end{eqnarray*}
Moreover, due to the fact that the supports of the optimal $w$ and $y$ are disjoint, we obtain
\begin{eqnarray}\label{new}
 z^*&=&\min \{ \bone^T |g| \ : \ ( d+g)^T \Delta =\bnull,\ g \in \RR^{m} \} \notag \\
    &=&\min \{ \|d-g\|_1 \ : \ g^T \Delta =\bnull,\ g \in \RR^{m} \}.
\end{eqnarray}
Define $d^*=d-g$.
\end{proof}

Note that the entries of $d^*$ define the elements of  the optimal
mask $\Bb^*$ for the corresponding $r\in\NN$, $j=1,\dots, N_{s,k}$
and $\varepsilon \in \Xi$.

We would like to emphasize that the value $z^*$ coincides with the
one determined by solving \eqref{def:lin_prog_3}, which was
already studied in \cite{C11} for $k \ge 1$ and in \cite{CCS04} for $k=1$. The
equivalent formulation of \eqref{def:lin_prog_3} in \eqref{new}
allows us not only to show the existence of optimal masks, but
also yields its new, very intuitive interpretation: {\it geometrically, $z^*$ is
the distance from $d$ to the nullspace of $\Delta^T$ in the
$\|\cdot \|_1$ norm}.

\section{Examples}
\label{sec:examples}

In this section we illustrate our results with several examples.
We give optimal masks for first and second difference schemes only
for simplicity of presentation. The higher order difference masks
can be determined analogously.

\begin{example} Let $M=\left( \begin{array}{cc}
2&1\\0&2\end{array}\right)$. In this case $\Xi \simeq
\{(0,0),(1,0),(1,1),(1,2)\}$. Consider a scalar bivariate
subdivision mask $\ab$ given in terms of the associated symbol
$$
 a(z)=\sum_{\alpha \in \ZZ^2} a(\alpha)z^\alpha =\frac{1}{4}b^2(z),
 \quad b(z)=\sum_{\varepsilon \in \Xi} z^\varepsilon, \quad
 z \in (\CC \setminus \{0\})^2.
$$
The optimization
problem in \eqref{new}
implemented in Matlab yields an optimal mask $\Bb^* \in \ell_0^{2
\times 2}(\ZZ^2)$ given in terms of the associated matrix symbol
$$
 B^*(z)= \frac{1}{4}\left( \begin{array}{cc} b_{11}(z) & b_{12}(z) \\ b_{21}(z) & b_{22}(z)
 \end{array} \right)
$$
with
\begin{eqnarray*}
 b_{11}(z)&=&(1+z_1)(1+2z_1z_2+z_1^2z_2^2), \quad b_{12}(z)=0, \\
 b_{21}(z)&=&-591/1739-z_2-709/1074z_1z_2-709/1074z_1z_2^2-z_1^2z_2^2-591/1739z_1^2z_2^3,
 \\
 b_{22}(z)&=&1439/1074+2z_1+709/1074z_1^2+709/1074z_1z_2+2z_1^2z_2+1439/1074z_1^3z_2.
\end{eqnarray*}
Note that there is also an optimal mask with integral $b_{ij}(z)$
given by
\begin{eqnarray*}
b_{11}(z)&=&(1+z_1)(1+2z_1z_2+z_1^2z_2^2), \quad b_{12}(z)=0, \\
b_{21}(z)&=&-z_2-z_1z_2^2-z_1^2z_2^2-z_1^2z_2^3 \quad \hbox{and}
\quad b_{22}(z)=1+2z_1+z_1^2+2z_1^2z_2+2z_1^3z_2.
\end{eqnarray*}
If we start the flow algorithm from section \ref{section:algorithms_and_examples}
with $d$ derived from this optimal $\Bb^*$, we get $d^*=d$ as an output.
For optimal masks we get
$$
 \|S_{\Bb^*}\|_\infty=\|S_{\Bb^*}|_\nabla\|_\infty=\frac{3}{4},
$$
which implies convergence of $S_\ab$, i.e. continuity of its
limits.
\end{example}

The next example is of a vector bivariate subdivision scheme
introduced in \cite{CJ01}. The corresponding dilation matrix is
$M=2I$ and $\Xi \simeq
\{0,1\}^2$.

\begin{example} \label{ex:vector} We transform the mask in \cite{CJ01} following the steps in \cite[Example 5.2]{C11} and
obtain
$$
 \Ab = \frac{1}{8}
\begin{array}{cccc}
& \left( \begin{array}{cc} 1 & 1 \\ 0 & 0 \\ \end{array} \right) &
\left( \begin{array}{cc} 2 & 1 \\ 0 & 1 \\ \end{array} \right) &
\left( \begin{array}{cc} 1 & 0 \\ 0 & 1 \\ \end{array} \right) \\ \\
\left( \begin{array}{cc} 1 & 1 \\ 0 & 0 \\ \end{array} \right) &
\left( \begin{array}{cc} 4 & 2 \\ 0 & 2 \\ \end{array} \right) &
\left( \begin{array}{cc} 5 & 1 \\ 0 & 3 \\ \end{array} \right) &
\left( \begin{array}{cc} 2 & 0 \\ 0 & 1 \\ \end{array} \right) \\ \\
\left( \begin{array}{cc} 2 & 1 \\ 0 & 1 \\ \end{array} \right) &
\left( \begin{array}{cc} 5 & 1 \\ 0 & 3 \\ \end{array} \right) &
\left( \begin{array}{cc} 4 & 0 \\ 0 & 2 \\ \end{array} \right) &
\left( \begin{array}{cc} 1 & 0 \\ 0 & 0\\ \end{array} \right) \\ \\
\left( \begin{array}{cc} \bf{1} & \bf{0} \\ \bf{0} & \bf{1} \\
\end{array} \right) &
\left( \begin{array}{cc} 2 & 0 \\ 0 & 1 \\ \end{array} \right) &
\left( \begin{array}{cc} 1 & 0 \\ 0 & 0 \\ \end{array} \right) &
\end{array} \ .
$$
The optimal difference mask is given by its symbol
$B^*(z)=\frac{1}{8}\left( b_{ij}(z) \right)_{1 \le i,j \le 4}$
with integral entries
\begin{eqnarray*}
 b_{11}(z)&=&(1+z_2)^2(1+z_1)(z_2z_1+1), \quad b_{12}(z)=(1-z_1)a_{11}(z), \\
 b_{13}(z)&=&b_{14}(z)=0, \quad
 b_{21}(z)=0, \quad b_{22}(z)=a_{22}(z), \quad b_{23}(z)=b_{24}(z)=0, \\
 b_{31}(z)&=&0, \quad b_{12}(z)=(1-z_2)a_{11}(z), \quad
 b_{33}(z)=(1+z_2)(1+z_1)^2(z_2z_1+1),\\
 b_{34}(z)&=&0, \quad
 b_{41}(z)=0, \quad b_{42}(z)=a_{22}(z), \quad b_{43}(z)=b_{44}(z)=0, \\
\end{eqnarray*}

The entries $b_{11}, b_{13}, b_{31}$ and $b_{33}$ are computed
using the optimization problem in \eqref{new} for the scalar mask
given by $a_{11}(z)$, which is defined by the $A_{11}(\alpha)$
elements of $\Ab$. If we start the flow algorithm from section
\ref{section:algorithms_and_examples}
with $d$ derived from this optimal $b_{11}$, $b_{13}$, $b_{31}$ and $b_{33}$,
we get $d^*=d$ as an output. The rest of the entries in $B^*(z)$ are defined
so that the associated operator $S_{\Bb^*}$ satisfies
$$
 \left(\begin{array}{cc} \nabla_1 & 0 \\ 0 & 1\\ \nabla_2 & 0 \\ 0 &1
 \end{array}\right)S_{\Ab}=S_{\Bb^*} \left(\begin{array}{cc} \nabla_1 & 0 \\ 0 & 1
 \\ \nabla_2 & 0 \\ 0 &1
 \end{array}\right)
$$
with $\nabla_\ell$, $\ell=1,2$, defined in \eqref{def:nabla_k},
see \cite{CDM91,C11} for more details on the structure of the
difference operator $\nabla$ and difference masks in the vector
case. For the optimal mask $\Bb^*$ we have
$$
 \|S_{\Bb^*}|_\nabla\|_\infty=\|S_{\Bb^*}\|_\infty=\frac{3}{4}.
$$
\end{example}

In the next example we determine an optimal second difference mask
for the so-called butterfly scheme studied in e.g. \cite{DLM}.

\begin{example} The dilation matrix is $M=2I$ and the mask is
given by its symbol
$$
 a(z)=\frac{1}{2}(z_1+1)(z_2+1)(z_1z_2+1)(z_1^2z_2^2-\frac{1}{16}c(z)), \quad z \in (\CC \setminus 0)^2,
$$
where
$$
 \begin{array}{c} c(z)=2z_2+2z_1-4z_1z_2-4z_1z_2^2-4z_1^2z_2+2z_1z_2^3+2z_1^3z_2+
    12z_1^2z_2^2\\-4z_1^3z_2^2-4z_1^2z_2^3-4z_1^3 z_2^3+2z_1^4
    z_2^3+2z_1^3z_2^4\ .\end{array}
$$
To show the $C^1-$regularity of the butterfly scheme by solving
the optimization problem in \eqref{new}, we have to determine
an optimal mask for the third iteration of the second difference
operator. An optimal mask for the second difference scheme is
determined easily, if for its derivation, instead of $\nabla^2$ in
\eqref{def:nabla_k}, we  make use of all three factors
$(z_1+1)(z_2+1)(z_1z_2+1)$ as it is done in \cite{DLM}. The
diagonal structure of the symbol $B^*(z)=\frac{1}{16}\left(
b_{ij}(z)\right)_{1\le i,j \le 3}$ with non-zero elements
\begin{eqnarray*}
 b_{11}(z)&=&(1+z_1)^{-1}(1+z_1z_2)^{-1}A(z), \quad
 b_{22}(z)=(1+z_1)^{-1}(1+z_2)^{-1}A(z),\\
 b_{33}(z)&=&(1+z_2)^{-1}(1+z_1z_2)^{-1}A(z),
\end{eqnarray*}
implies that the corresponding mask is optimal. This special
structure of the symbol allows us to use the univariate strategy
in section \ref{subsec:univariate_case} to show the optimality of
the mask. The iterates of $S_{\Bb^*}$ are also optimal and
$\|S^2_{\Bb^*}\|_\infty<1/2$ implies that the scheme is $C^1$.
\end{example}

The last example shows that although the optimal mask determined
by solving the optimization problem in Theorem
\ref{th:existence_optimal_mask_general_case} can be non-integral,
the optimal value $z^*$ still is. We were not able to find
subdivision schemes with integral masks (after an appropriate
normalization), which did not possess either integral optimal
masks for higher order difference schemes or for which $z^*$ were
not integral.

\begin{example} Let $M=2I$ and
$$
a(z)=4\left(\frac{1+z_1}{2}\right)\left(\frac{1+z_2}{2}\right)^3
           \left(\frac{1+z_1z_2}{2}\right)^3.
$$
The associated bivariate scheme  generates a three-directional box
spline. Matlab yields $\|S_{\Bb_2^*}\|_\infty=z^*=\frac{3}{8}<
\frac{1}{2}$ implying the $C^1-$regularity of the scheme. The optimal second difference mask
$2^5\Bb_2^* \in \ell_0^{3 \times 3}(\ZZ^2)$ satisfying \eqref{id:Sa=Sbk} is not integral and we
think it will serve no purpose to present it here. However, it
allows us to derive  another optimal second difference mask  given
by $B_2^*(z)= 2^{-5} \left(b_{ij}(z)\right)_{i,j=1,\dots,3}$ with
integral
\begin{eqnarray*}
 b_{11}(z)&=& z_2(z_2+1)(z_1^2z_2^4+2z_1^2z_2^3+5z_1z_2^2+z_1^2z_2^2+
 z_2+3z_1z_2+3), \\
 b_{12}(z)&=&(1-z_1)(z_1z_2^2+1)^2, \quad  b_{13}(z)=0,
 \quad  b_{21}(z)=0 \\
 b_{22}(z)&=&(z_2+1)^2(z_1z_2+1)^3, \quad b_{31}(z)=z_2^4-z_2^6, \\
 b_{32}(z)&=&z_2^2(-1+z_1^2z_2^4-z_1+z_2^4z_1-4z_1z_2+4z_1z_2^3),\\
 b_{33}(z)&=&(1+z1)(z_1^3z_2^3+(2z_2^2-z_2^3)z_1^2+(3z_2^2+
 4z_2^3+z_2^4+3z_2)z_1+1+z_2^2+z_2).
\end{eqnarray*}

\end{example}

\section{Summary}

In this paper we establish a link between convergence analysis of
multivariate subdivision schemes and network flows as well as between the regularity analysis of subdivision and
linear optimization. Advances in network flow theory and linear optimization provide efficient algorithms for
determining what we call optimal difference masks. The regularity of the underlying
subdivision scheme can be easily read off the corresponding optimal values, which determine the norm of the difference operators defined by
such optimal difference masks. We would like to emphasize that we only prove the existence of the optimal masks, which are
by no means unique. The existence of the optimal masks shows that there are no conceptual differences in the analysis of multivariate and univariate subdivision schemes. Moreover, if the subdivision mask has only rational entries, then so does the first difference optimal mask.
There are no theoretical results that guarantee the same in the case of higher order difference masks, but we were not
able to find an example of a subdivision scheme with rational entries whose higher order difference schemes would be irrational.


\end{document}